
\documentclass{amsart}
\usepackage{amssymb,amscd}
\usepackage{pstricks}
\usepackage{graphicx}

\setlength{\textheight}{43pc}
\setlength{\textwidth}{28pc}

\numberwithin{equation}{section}

\newtheorem{theorem}{Theorem}[section]

\newtheorem{corollary}[theorem]{Corollary}
\newtheorem{lemma}[theorem]{Lemma}

\newtheorem{fact}[theorem]{Fact}
\newtheorem{example}[theorem]{Example}
\newtheorem{question}[theorem]{Question}

\DeclareMathOperator{\Lie}{Lie}

\setcounter{secnumdepth}{3}

\newif\ifshowflag
\showflagtrue   
\newif\ifshowqstn
\showqstntrue   
\showqstnfalse 
\newif\ifshowinfo
\showinfotrue   
\showinfofalse 





\newcommand{\gex}{\gtrsim}  
\newcommand{\eqx}{\simeq}   

\renewcommand{\phi}{\varphi}


\DeclareMathOperator{\diam}{diam}

\DeclareMathOperator{\id}{id}

\newcommand{\mathfont}{\mathbb} 

\newcommand{\mfC}{{\mathfont C}}      
\newcommand{\mfN}{{\mathfont N}}      
\newcommand{\mfR}{{\mathfont R}}      
\newcommand{\mfS}{{\mathfont S}}      



\hyphenation{ho-meo-mor-phism ho-meo-mor-phisms self-ho-meo-mor-phism self-ho-meo-mor-phisms quasi-hy-per-bo-lic -quasi-ho-mo-gen-eity bi-lip-schitz}

\catcode`\@=11       

\def\rf#1{\@rf{#1}#1:;;}
\def\rfs#1{\@rfs{#1}#1:;;}
\def\rfm#1{\@rfF#1<>;;}

\def\@C{C}\def\@CC{CC}\def\@E{E}\def\@F{F}\def\@L{L}\def\@P{P}\def\@Q{Q}
\def\@R{R}\def\@S{S}\def\@T{T}\def\@TT{TT}\def\@X{X}\def\@s{s}\def\@f{f}

\def\@rf#1#2:#3;;{\def\@b{#2}
  \ifx\@b\@C Corollary~\ref{#1}\else%
  \ifx\@b\@CC Corollary~\ref{#1}\else%
  \ifx\@b\@E (\ref{#1})\else
  \ifx\@b\@F Fact~\ref{#1}\else%
  \ifx\@b\@L Lemma~\ref{#1}\else%
  \ifx\@b\@P Proposition~\ref{#1}\else%
  \ifx\@b\@Q Question~\ref{#1}\else%
  \ifx\@b\@R Remark~\ref{#1}\else%
  \ifx\@b\@S Section~\ref{#1}\else%
  \ifx\@b\@T Theorem~\ref{#1}\else%
  \ifx\@b\@TT Theorem~\ref{#1}\else%
  \ifx\@b\@X Example~\ref{#1}\else%
  \ifx\@b\@s \S\ref{#1}\else
  \ifx\@b\@f Figure~\ref{#1}\else%
  \ref{#1}\fi\fi\fi\fi\fi\fi\fi\fi\fi\fi\fi\fi\fi\fi}

\def\@rfs#1#2:#3;;{\def\@b{#2}
  \ifx\@b\@C Corollaries~\ref{#1}\else%
  \ifx\@b\@CC Corollaries~\ref{#1}\else%
  \ifx\@b\@F Facts~\ref{#1}\else%
  \ifx\@b\@L Lemmas~\ref{#1}\else%
  \ifx\@b\@P Propositions~\ref{#1}\else%
  \ifx\@b\@Q Questions~\ref{#1}\else%
  \ifx\@b\@R Remarks~\ref{#1}\else%
  \ifx\@b\@S Sections~\ref{#1}\else%
  \ifx\@b\@T Theorems~\ref{#1}\else%
  \ifx\@b\@TT Theorems~\ref{#1}\else%
  \ifx\@b\@X Examples~\ref{#1}\else%
  \ifx\@b\@s \S\ref{#1}\else
  \ifx\@b\@f Figures~\ref{#1}\else%
  \ref{#1}\fi\fi\fi\fi\fi\fi\fi\fi\fi\fi\fi\fi\fi}

\def\@rfF<#1>#2;;{\def\@c{#2}
  \@rfs{#1}#1:;;\ifx\@c\empty\else\@rfL:#2;;\fi}

\def\@rfL:#1<#2>#3;;{\def\@b{#2}\def\@c{#3}
  #1\ifx\@b\empty\else\ref{#2}\ifx\@c\empty\else\@rfL:#3;;\fi\fi}

\catcode`\@=12       

\def\vint_#1{\mathchoice
          {\mathop{\vrule width 6pt height 3 pt depth -2.5pt
                  \kern -8pt \intop}\nolimits_{\kern -4pt#1}}%
          {\mathop{\vrule width 5pt height 3 pt depth -2.6pt
                  \kern -6pt \intop}\nolimits_{#1}}%
          {\mathop{\vrule width 5pt height 3 pt depth -2.6pt
                  \kern -6pt \intop}\nolimits_{#1}}%
          {\mathop{\vrule width 5pt height 3 pt depth -2.6pt
                   \kern -6pt \intop}\nolimits_{#1}}}
\newcommand{\intavg}{\vint}        



          

\begin{document} 

\title[Bilipschitz Homogeneity]{Transitive bi-Lipschitz group actions and bi-Lipschitz parameterizations}
\date{\today}

\author{David M. Freeman}
\address{%
\footnotesize
\begin{tabular}{lr}
\textsf{University of Cincinnati Blue Ash College}\\
\textsf{9555 Plainfield Road, Cincinnati, Ohio 45236}
\end{tabular}
\normalsize}
\email{david.freeman@uc.edu}

\keywords{bi-Lipschitz homogeneity, bi-Lipschitz parameterizations}
\subjclass[2010]{Primary: 30C62; Secondary: 22E25, 51F99}

\begin{abstract} 
We prove that Ahlfors $2$-regular quasisymmetric images of $\mfR^2$ are bi-Lipschitz images of $\mfR^2$ if and only if they are uniformly bi-Lipschitz homogeneous with respect to a group. We also prove that certain geodesic spaces are bi-Lipschitz images of Carnot groups if they are inversion invariant bi-Lipschitz homogeneous with respect to a group. 
\end{abstract}

\maketitle

\section{Introduction}\label{S:intro}

The problem of finding bi-Lipschitz parameterizations (as in \cite{Semmes-params}) and its connections with the so-called \textit{Quasiconformal Jacobian Problem }(as in \cite{BHS-QC}) has attracted considerable interest. We begin the present paper by examining a few ways the concepts of bi-Lipschitz homogeneity and quasihomogeneous parameterizations relate to these problems. Our initial results concern Ahlfors $2$-regular metric spaces $X$ that are quasisymmetrically homeomorphic to $\mfR^2$. We show that if the space admits a transitive uniformly bi-Lipschitz group action, then the space can be parametrized by a bi-Lipschitz map $f:\mfR^2\to X$. We also show that in higher dimensions the $n$-regularity of $X$ implies that any quasihomogeneous homeomorphism $f:\mfR^n\to X$ must be bi-Lipschitz. Therefore, if one wants to construct a bi-Lipschitz parameterization of an $n$-regular space, it is sufficient to construct a quasihomogeneous parameterization. Analogous results (in the two-dimensional case) pertaining to the Quasiconformal Jacobian Problem are also noted. 

In the remainder of the paper we no longer restrict ourselves to parameterizations of the form $f:\mfR^n\to X$. Instead, we find bi-Lipschitz parameterizations of the form $f:\mathbb{G}\to X$, where $\mathbb{G}$ denotes a Carnot group and $X$ denotes a certain class of geodesic metric spaces. To obtain the desired bi-Lipschitz parameterizations in this setting we focus on the property of inversion invariant bi-Lipschitz homogeneity as studied in \cite{Freeman-iiblh}. 

The main results are listed in \rf{S:main}, followed by an explanation of our notational conventions in \rf{S:prelims}. We examine relevant properties of strong $A_\infty$ weights in \rf{S:weights}, and the relationship between quasihomogeneous and bi-Lipschitz maps in \rf{S:QH}. We are then ready to prove \rf{T:R2} and \rf{T:QHBL} in \rf{S:BL} and \rf{S:QH}, respectively. We study consequences of inversion invariant bi-Lipschitz homogeneity in \rf{S:inversion}, and prove the remaining results in \rf{S:BLC} and \rf{S:remains}.

\section{Main Results}\label{S:main}

\begin{theorem}\label{T:R2}
Suppose $X$ is an Ahlfors 2-regular, linearly locally connected proper metric space that is homeomorphic to $\mfR^2$. The following are equivalent.
\begin{itemize}
  \item[$(a)$]{$X$ is uniformly bi-Lipschitz homogeneous with respect to a group}
  \item[$(b)$]{There exists a bi-Lipschitz homeomorphism $f:\mfR^2\to X$.}
  \item[$(c)$]{There exists a quasihomogeneous homeomorphism $f:\mfR^2\to X$.} 
\end{itemize}
\end{theorem}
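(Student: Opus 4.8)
The plan is to prove the cycle of implications $(b)\Rightarrow(c)\Rightarrow(a)\Rightarrow(b)$. The implication $(b)\Rightarrow(c)$ is essentially trivial: a bi-Lipschitz homeomorphism is in particular quasihomogeneous, since the action of the (bi-Lipschitz, hence quasihomogeneous with uniform constants) similarity group of $\mfR^2$ transports under $f$ to a family of uniformly quasihomogeneous self-maps of $X$ acting transitively. The implication $(c)\Rightarrow(b)$ should be handled by the general machinery of \rf{S:QH}, which relates quasihomogeneous and bi-Lipschitz maps: the key point is that Ahlfors $2$-regularity forces a quasihomogeneous parameterization $f:\mfR^2\to X$ to be bi-Lipschitz. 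Indeed, a quasihomogeneous homeomorphism is quasisymmetric, so $X$ is a $2$-regular quasisymmetric image of $\mfR^2$; the pullback of $2$-dimensional Hausdorff measure on $X$ is then a strong $A_\infty$ weight, and the homogeneity under the conformal/similarity group of $\mfR^2$ pulled back through $f$ should force this weight to be comparable to Lebesgue measure (this is where the results of \rf{S:weights} enter), whence $f$ is bi-Lipschitz. In particular $(b)\Leftrightarrow(c)$ for $2$-regular spaces is exactly \rf{T:QHBL} specialized to $n=2$.

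The substantive direction is $(a)\Rightarrow(b)$ (and also $(a)\Rightarrow(c)$, but by the above it suffices to produce either one). Assume $X$ is uniformly bi-Lipschitz homogeneous with respect to a group $G$. Since $X$ is Ahlfors $2$-regular, linearly locally connected, proper, and homeomorphic to $\mfR^2$, the Bonk--Kleiner theorem applies: $X$ is quasisymmetrically equivalent to $\mfR^2$, so there is a quasisymmetric homeomorphism $g:\mfR^2\to X$. The idea is to upgrade $g$ to a bi-Lipschitz map using the transitive uniformly bi-Lipschitz group action. Pulling the action back through $g$, we obtain a group of uniformly quasisymmetric self-homeomorphisms of $\mfR^2$ acting transitively with uniform constants; equivalently, the pullback weight $w = g^\#(\H^2|_X)$ is a strong $A_\infty$ weight on $\mfR^2$ whose associated metric is, via $g$, bi-Lipschitz homogeneous under a group. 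The goal is to show $w$ is comparable to Lebesgue measure, i.e.\ the strong $A_\infty$ weight is trivial up to bi-Lipschitz change of the parameterization; then post-composing $g$ with the resulting bi-Lipschitz correction yields the desired $f:\mfR^2\to X$.

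The main obstacle, as I see it, is precisely this last step: showing that \emph{uniform bi-Lipschitz homogeneity under a group} rigidifies the strong $A_\infty$ weight enough to make it comparable to Lebesgue measure. Transitivity alone gives a homogeneity/self-similarity of $w$ at every point and every scale with uniform constants, which should prevent the weight from degenerating (no cusps, no collapsing); the group structure is what allows one to iterate and bootstrap local comparability to global comparability. Concretely, I would argue that the uniformly bi-Lipschitz orbit maps give, at each point $p\in X$ and each pair of scales, a bi-Lipschitz map between annuli of comparable modulus, which pins down the metric-measure structure of $X$ to be that of a $2$-regular space with ``no weight,'' i.e.\ locally bi-Lipschitz to $\mfR^2$; properness plus homogeneity then globalizes this to a genuine bi-Lipschitz homeomorphism $f:\mfR^2\to X$. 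Making the passage from ``uniformly bi-Lipschitz homogeneous'' to ``the strong $A_\infty$ weight is $A_1$-comparable to Lebesgue measure'' precise — presumably via the modulus/quasiconformal estimates available in dimension $2$ together with a compactness argument using properness — is the technical heart of the proof, and is where I expect the results of \rf{S:weights} to do the real work.
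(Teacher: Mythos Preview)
Your outline of the easy implications is essentially the paper's, if somewhat tangled: $(b)\Rightarrow(c)$ is immediate (bi-Lipschitz maps are quasihomogeneous), $(c)\Rightarrow(b)$ is exactly \rf{T:QHBL} with $n=2$, and $(c)\Rightarrow(a)$ is obtained by conjugating the translation group of $\mfR^2$ through the quasihomogeneous parameterization, which you gesture at in the wrong place. The setup for $(a)\Rightarrow(b)$ is also right: Wildrick's uniformization (not Bonk--Kleiner, which treats the sphere) gives a quasisymmetric $g:\mfR^2\to X$, the pullback $\mu(E)=\mathcal{H}^2(g(E))$ is a metric doubling measure with strong $A_\infty$ density $\omega$, and the conjugated group $G'=g^{-1}Gg$ acts by uniformly bi-Lipschitz maps on $(\mfR^2,d_\mu)$, hence (via \rf{L:dbling-bl}) by uniformly quasiconformal maps on $\mfR^2$.

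The genuine gap is the next step, and your proposed mechanism for it does not work. You suggest that the uniformly bi-Lipschitz orbit maps furnish, ``at each pair of scales, a bi-Lipschitz map between annuli of comparable modulus,'' and that a modulus/compactness argument then forces $\omega$ to be comparable to Lebesgue. But an $L$-bi-Lipschitz self-map does \emph{not} relate different scales; it moves balls to sets of comparable diameter, so transitivity alone yields no self-similarity across scales and no direct control on the growth of $\omega$. The paper's actual engine is structural and specifically two-dimensional: Tukia's theorem (\cite{Tukia-conjugate}, see \cite[Theorem~12.1]{Gehring-topics}) says that any uniformly quasiconformal group on $\overline{\mfR^2}$ is quasiconformally conjugate to a M\"obius group $M$, i.e.\ $G'=h^{-1}Mh$ for some quasiconformal $h$. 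One then shows (using that iterates of $\phi\in M$ with scaling factor $\neq 1$ would contradict the uniform bi-Lipschitz bound on $(\mfR^2,d_\mu)$) that $M$ consists only of rotations and translations, so $J\phi\equiv 1$. The chain-rule identity from \rf{L:dbling-bl} then reads $\omega(x)\eqx Jh(x)\cdot \omega(g(x))/Jh(g(x))$ with constants independent of $g\in G'$; transitivity makes the ratio $\omega/Jh$ essentially constant, so $\omega\eqx Jh$ a.e., whence $h:(\mfR^2,d_\mu)\to\mfR^2$ is bi-Lipschitz and $h\circ g^{-1}$ is the desired map. Without Tukia's conjugacy (or an equivalent rigidity input), nothing in \rf{S:weights} alone will close this argument; \rf{L:dbling-bl} only translates between the bi-Lipschitz and quasiconformal pictures and does not by itself trivialize the weight.
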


We include item $(c)$ in the above theorem due to the prominent appearance of quasihomogeneous maps in the theory of bi-Lipschitz homogeneous Jordan curves (see \cite{HM-blh}, \cite{GH-blh}, \cite{Bishop-blh}). In fact the relationship between $(b)$ and $(c)$ can be strengthened and generalized as follows. 

\begin{theorem}\label{T:QHBL}
Suppose $X$ is an Ahlfors $n$-regular metric space. A homeomorphism $f:\mfR^n\to X$ is quasihomogeneous if and only if it is bi-Lipschitz. 
\end{theorem}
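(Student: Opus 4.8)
\emph{Plan of proof.} The forward implication is immediate: if $f$ is $L$-bi-Lipschitz then $d(f(x),f(y))/d(f(z),f(w))\le L^{2}\,|x-y|/|z-w|$ for all distinct $x,y,z,w\in\mfR^{n}$, so $f$ is quasihomogeneous with $\eta(t)=L^{2}t$. For the converse, given any quasihomogeneous homeomorphism $g\colon Y\to Z$ with control function $\eta$, I will work with the distortion functions
\[
\rho_{g}^{+}(t)=\sup\{d(g(x),g(y)):|x-y|\le t\},\qquad \rho_{g}^{-}(t)=\inf\{d(g(x),g(y)):|x-y|=t\}.
\]
Applying quasihomogeneity to pairs of points at equal distance, and to pairs whose distance ratio is at most $1$, one checks that $\rho_{g}^{+}$ is non-decreasing, that $\rho_{g}^{-}$ is non-decreasing up to the factor $\eta(1)$, and that $\rho_{g}^{+}\le C(\eta)\,\rho_{g}^{-}$; write $\rho_{g}$ for this common comparability class. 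Two preliminary remarks: a direct manipulation of the definition shows $f^{-1}\colon X\to\mfR^{n}$ is again quasihomogeneous (with control function $s\mapsto 1/\eta^{-1}(1/s)$), and quasihomogeneity of $f$ yields $\rho_{f^{-1}}\circ\rho_{f}\eqx\id$ on $(0,\infty)$.

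The heart of the argument is the estimate: if $g\colon Y\to Z$ is quasihomogeneous and $Y$ and $Z$ are Ahlfors $n$-regular, then
\[
\frac{\rho_{g}(R)}{\rho_{g}(r)}\gex\frac{R}{r}\qquad\text{for all }0<r\le R,
\]
with implied constant depending only on $n$, $\eta$, and the regularity constants. I would prove it by picking $\gex(R/r)^{n}$ pairwise $r$-separated points in a ball $B(x,R)\subset Y$ (a maximal $r$-separated set, using Ahlfors $n$-regularity of $Y$). By the trivial inclusion $g(B(x,R))\subseteq B(g(x),\rho_{g}^{+}(R))$ together with quasihomogeneity, their $g$-images lie in $B(g(x),C(\eta)\rho_{g}(R))\subset Z$ and are pairwise $\eqx\rho_{g}(r)$-separated, so Ahlfors $n$-regularity of $Z$ bounds their number by $\lex(\rho_{g}(R)/\rho_{g}(r))^{n}$. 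Comparing the two counts gives the estimate; note only the ``forward'' inclusion of balls is used.

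Applying this to $g=f$ (domain $\mfR^{n}$, target $X$) gives $\rho_{f}(R)/\rho_{f}(r)\gex R/r$, and applying it to $g=f^{-1}$ (domain $X$, target $\mfR^{n}$) gives $\rho_{f^{-1}}(R)/\rho_{f^{-1}}(r)\gex R/r$, which, rewritten in terms of $\rho_{f}$ via $\rho_{f^{-1}}\circ\rho_{f}\eqx\id$, becomes $\rho_{f}(R)/\rho_{f}(r)\lex R/r$. Hence $\rho_{f}(R)/\rho_{f}(r)\eqx R/r$ for all $R,r>0$, so $\rho_{f}(t)\eqx t$. Since $\rho_{f}^{-}(|x-y|)\le d(f(x),f(y))\le\rho_{f}^{+}(|x-y|)$ and the outer terms are both $\eqx|x-y|$, the map $f$ is bi-Lipschitz, with constant depending only on $n$, $\eta$, and the Ahlfors regularity constant of $X$.

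The hard part will be the upper bound $\rho_{f}(R)/\rho_{f}(r)\lex R/r$. The obvious route --- cover $B(x,R)$ by $\lex(R/r)^{n}$ balls of radius $r$, push forward by $f$, and use $\H^{n}\big(f(B(x,R))\big)\gex\rho_{f}(R)^{n}$ --- would require $f(B(x,R))$ to contain a ball of radius $\eqx\rho_{f}(R)$ in $X$, and this is awkward to justify because no local-connectivity hypothesis is placed on $X$ in this theorem (metric balls in $X$ need not be connected, so the usual separation argument is unavailable). Running the separated-set estimate for $f^{-1}$ instead bypasses this, at the price of genuinely using Ahlfors $n$-regularity of $X$; that use is essential, since the identity map $\mfR^{n}\to(\mfR^{n},|\cdot|^{\alpha})$ is quasihomogeneous but not bi-Lipschitz for $\alpha\ne1$. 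Finally, one must keep track that all implied constants depend only on $n$, $\eta$, and the regularity data, and record that $\diam X=\infty$ (so that $n$-regularity of $X$ holds at every scale), which follows from the key estimate since it forces $\rho_{f}(R)\to\infty$.
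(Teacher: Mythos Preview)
Your argument is correct and follows a genuinely different route from the paper's. One small correction: the bi-Lipschitz constant cannot depend \emph{only} on $n$, $\eta$, and the regularity constant of $X$. Replacing $(X,d)$ by $(X,\lambda d)$ leaves $\eta$ and the Ahlfors constant unchanged but scales $f$, so the best you actually obtain is $\rho_f(t)\eqx\rho_f(1)\cdot t$; the bi-Lipschitz constant then also depends on the normalization $\rho_f(1)$. This does not affect the theorem as stated.

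The paper's proof is measure-theoretic. It pulls back $\mathcal{H}^n$ from $X$ to get a metric doubling measure $\mu$ on $\mfR^n$ with strong $A_\infty$ density $\omega$, and then shows $\mathcal{H}^n(f(Q))\eqx\mathcal{H}^n(Q)$ for every cube $Q$; Lebesgue differentiation then gives $\omega\eqx1$ a.e., so $\id\colon\mfR^n\to(\mfR^n,d_\mu)$ is bi-Lipschitz. The cube estimate is proved by contradiction: if $\diam f(Q_i)/\diam Q_i\to\infty$ along dyadic cubes of side $2^{-m_i}$, quasihomogeneity forces the same blow-up for \emph{every} cube of that sidelength, and summing over a dyadic partition of a fixed unit cube yields $\mathcal{H}^n(f(Q^0))=\infty$. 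Your approach is more elementary: a direct separated-set count using Ahlfors regularity on both sides, applied once to $f$ and once to $f^{-1}$, with no appeal to the $A_\infty$ machinery or Lebesgue differentiation. What the paper's route buys is consistency with the strong-$A_\infty$ framework used elsewhere in the article (the proof of Theorem~\ref{T:R2} and Corollary~\ref{C:QC}); what yours buys is a self-contained argument that would transplant unchanged to any pair of Ahlfors $n$-regular spaces in place of $\mfR^n$ and $X$.
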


The proof of \rf{T:R2} relies on the characterization of quasiconformal groups acting on $\mfR^2$ given by Tukia's work in \cite{Tukia-conjugate} (see also \cite[Theorem 12.1]{Gehring-topics}), the quasisymmetric uniformization theory of Bonk, Kleiner, and Wildrick (see \cite{BK-sphere},\cite{Wildrick-planes}), along with work of David and Semmes from \cite{Semmes-strong} and \cite{DS-infinity}. 

Due to considerations outlined in \cite{BHS-QC}, \rf{T:R2} is equivalent to the following result. See \rf{S:weights} for further discussion of strong $A_\infty$ weights and an explanation of \rf{C:QC}.

\begin{corollary}\label{C:QC}
Suppose $\omega$ is a strong $A_\infty$ weight on $\mfR^2$. The following two items are equivalent:
\begin{itemize}
  \item[$(a)$]{There exists a quasiconformal homeomorphism $f:\mfR^2\to\mfR^2$ and a constant $1\leq C<+\infty$ such that for almost every $x\in \mfR^2$ we have $C^{-1}Jf(x)\leq \omega(x)\leq C\,Jf(x)$.}
  \item[$(b)$]{The space $(\mfR^2,d_\mu)$ corresponding to $\omega$ is uniformly bi-Lipschitz homogeneous with respect to a group.}
\end{itemize}
\end{corollary}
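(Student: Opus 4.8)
The plan is to derive \rf{C:QC} from \rf{T:R2} by passing through the dictionary, recalled in \rf{S:weights}, between strong $A_\infty$ weights on $\mfR^2$ and metric deformations of $\mfR^2$. Given such a weight $\omega$, write $\mu$ for the measure $\omega\,dx$ and let $d_\mu$ denote the associated deformed metric, defined as the infimum of $\mu$-lengths of curves. By the results of \cite{Semmes-strong} and \cite{DS-infinity}, the space $X:=(\mfR^2,d_\mu)$ is Ahlfors $2$-regular, proper, and linearly locally connected, the identity map $\iota\colon(\mfR^2,|\cdot|)\to X$ is quasisymmetric, and $X$ is in particular homeomorphic to $\mfR^2$. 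Thus $X$ satisfies the hypotheses of \rf{T:R2}.

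The heart of the matter is to identify item $(a)$ of \rf{C:QC} with item $(b)$ of \rf{T:R2} applied to this $X$; this identification is exactly the correspondence underlying the Quasiconformal Jacobian Problem, as explained in \cite{BHS-QC}. In one direction, if $h\colon\mfR^2\to\mfR^2$ is quasiconformal with $C^{-1}Jh\le\omega\le C\,Jh$ almost everywhere, then $f:=\iota\circ h^{-1}$, regarded as a map $\mfR^2\to X$, is bi-Lipschitz: the change-of-variables formula together with $Jh\simeq\omega$ and $\mathcal{H}^2_{d_\mu}\simeq\mu$ shows that $f$ distorts the relevant measures by a bounded factor, and the strong $A_\infty$ condition promotes this volume control to a genuine bi-Lipschitz estimate on distances. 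In the other direction, if $f\colon\mfR^2\to X$ is bi-Lipschitz then $g:=\iota^{-1}\circ f$ is a self-homeomorphism of $\mfR^2$ that is quasisymmetric, being a composition of the bi-Lipschitz map $f$ with the quasisymmetric map $\iota^{-1}$; hence $g$ is quasiconformal, and reversing the change-of-variables computation shows that $\omega$ is comparable to the Jacobian of $g^{-1}$. Therefore $(a)$ of \rf{C:QC} holds if and only if item $(b)$ of \rf{T:R2} holds for $X$.

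Finally, \rf{T:R2} asserts that for $X$ item $(b)$ holds if and only if item $(a)$ does, i.e. if and only if $X=(\mfR^2,d_\mu)$ is uniformly bi-Lipschitz homogeneous with respect to a group --- which is precisely item $(b)$ of \rf{C:QC}. Chaining the equivalences yields the asserted equivalence $(a)\Leftrightarrow(b)$ in \rf{C:QC}.

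I expect the second paragraph to contain essentially all the real work. Its forward direction is in essence a restatement of results in \cite{BHS-QC}, but one must check carefully that the comparison $Jh\simeq\omega$ yields a bi-Lipschitz --- not merely quasisymmetric --- parameterization of $(\mfR^2,d_\mu)$; this is exactly where the full strength of the strong $A_\infty$ hypothesis is used, and where the equivalence would break down for a general $A_\infty$ weight. The reverse direction is softer, requiring only the comparability $\mathcal{H}^2_{d_\mu}\simeq\mu$ so that a bi-Lipschitz $f$ transports Lebesgue measure to a measure comparable to $\omega\,dx$, which determines the Jacobian of $g^{-1}$ up to multiplicative constants. Once this dictionary is established, \rf{C:QC} follows formally from \rf{T:R2}; no use of \rf{T:QHBL} or of item $(c)$ of \rf{T:R2} is required.
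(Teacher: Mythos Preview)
Your proposal is correct and follows essentially the same route as the paper: both arguments verify that $X=(\mfR^2,d_\mu)$ satisfies the hypotheses of \rf{T:R2} (citing \cite{Semmes-strong}, \cite{DS-infinity}, \cite{Semmes-params}), invoke the \cite{BHS-QC} dictionary identifying item $(a)$ of \rf{C:QC} with the existence of a bi-Lipschitz parameterization $\mfR^2\to X$, and then apply the equivalence $(a)\Leftrightarrow(b)$ of \rf{T:R2}. The paper presents this as two separate implications rather than as one chained equivalence, but the content is the same.
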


Departing from the 2-dimensional setting, we examine the condition of inversion invariant bi-Lipschitz homogeneity as studied in \cite{Freeman-iiblh}. In the setting of certain geodesic metric spaces we obtain the following result. 

\begin{theorem}\label{T:Carnot}
Suppose $X$ is an unbounded, proper, doubling, and geodesic metric space. If $X$ is inversion invariant bi-Lipschitz homogeneous with respect to a group, then $X$ is bi-Lipschitz equivalent to a Carnot group $\mathbb{G}$ equipped with a Carnot-Caratheodory distance associated to the horizontal layer of $\Lie(\mathbb{G})$.
\end{theorem}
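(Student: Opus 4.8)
The plan is to combine the structure theory for inversion invariant bi-Lipschitz homogeneous spaces from \cite{Freeman-iiblh} with Gromov's characterization of Carnot groups as the locally compact groups admitting expanding automorphisms (equivalently, via the resolution of Hilbert's fifth problem in the locally compact setting). First I would exploit inversion invariance together with transitive bi-Lipschitz homogeneity: the stabilizer-type argument in \cite{Freeman-iiblh} shows that the bi-Lipschitz self-maps of $X$, together with an inversion centered at a basepoint, generate a large supply of bi-Lipschitz maps, and crucially produce bi-Lipschitz \emph{dilations} of $X$ — maps that scale distances by an arbitrary factor up to multiplicative error. Because $X$ is proper, unbounded, and doubling, one can then run an Arzel\`a--Ascoli / ultralimit argument on rescalings of these near-dilations to extract genuine (metric) dilations: a one-parameter family of homeomorphisms $\{\delta_t\}_{t>0}$ with $d(\delta_t x,\delta_t y)$ comparable to $t\,d(x,y)$, uniformly, fixing the basepoint. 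This is essentially the content extracted in \cite{Freeman-iiblh} and I would cite it as the starting point.

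Next I would identify a transitive locally compact group of isometries (or uniformly bi-Lipschitz homeomorphisms) of a model space bi-Lipschitz equivalent to $X$. The point is that $X$ being geodesic, proper, and doubling with a cocompact (indeed transitive) uniformly bi-Lipschitz action means $X$ is quasi-isometric — in fact bi-Lipschitz — to a connected, locally compact, compactly generated group $G$ with a left-invariant geodesic metric; the dilations $\{\delta_t\}$ descend to expanding automorphisms of $G$ (after passing to a suitable quotient by a compact normal subgroup, using the Montgomery--Zippin structure theory to ensure $G$ is a Lie group, or more precisely has "no small subgroups"). The existence of a contracting automorphism on a connected Lie group forces its Lie algebra to be graded by positive weights, i.e. the group is (after the compact quotient) a Carnot group $\mathbb{G}$; this is Siebert's theorem on contractible locally compact groups. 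Finally, since $X$ is geodesic and the dilations act as honest metric scalings, the induced length metric on $\mathbb{G}$ must be comparable to the Carnot--Carath\'eodory metric associated to the degree-one layer of $\Lie(\mathbb{G})$ — any left-invariant geodesic metric admitting a compatible one-parameter dilation group is bi-Lipschitz to the sub-Riemannian metric of the first layer, because the first layer is exactly the eigenspace on which $\delta_t$ acts by the smallest factor $t$, and geodesics must move in directions that are cheapest under the dilation.

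I expect the main obstacle to be the passage from the "near-dilations and uniformly bi-Lipschitz action" that inversion invariant bi-Lipschitz homogeneity literally provides, to a genuine locally compact \emph{transitive isometry group} on a space bi-Lipschitz (not merely quasi-isometric) to $X$ — in particular, ruling out small subgroups and applying Montgomery--Zippin/Gleason cleanly in the merely bi-Lipschitz (non-Riemannian, non-smooth) category. The doubling hypothesis is what makes the relevant automorphism groups locally compact and the rescaling limits well-defined, and properness plus geodicity are what let one upgrade quasi-isometric statements to bi-Lipschitz ones; but stitching these together with Siebert's gradation theorem requires care about which normal compact subgroup one quotients by and why the quotient map is bi-Lipschitz onto its image. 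A secondary technical point is verifying that the resulting Carnot--Carath\'eodory metric is the one attached specifically to the \emph{horizontal} (first) layer rather than some other bracket-generating sub-bundle; this follows from the uniqueness of the smallest dilation weight, but must be argued. Once those structural points are in place, the bi-Lipschitz equivalence $X \simeq \mathbb{G}$ is obtained by transporting the metric through the chain of identifications and invoking the ball-comparison (Ahlfors-regularity-type) estimates that the doubling and dilation structure supply.
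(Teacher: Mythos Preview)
Your proposal shares its opening moves with the paper --- both pass to an isometric model (the paper does this via $d_G(x,y):=\sup_{g\in G} d(gx,gy)$ followed by the induced length metric) and both invoke the Gleason--Montgomery--Zippin--Yamabe theory to put Lie structure on $G$ --- but it diverges at the identification step, and the obstacle you yourself flag as ``main'' is a genuine gap that is not closed. You assert that $X$ is bi-Lipschitz to \emph{the group} $G$ with a left-invariant metric, and that the near-dilations descend to expanding automorphisms of $G$. Neither holds in general: a transitive isometric $G$-action on a geodesic space identifies it with a homogeneous space $G/H$ (this is Berestovskii's theorem), not with $G$, and the stabilizer $H$ need not be normal, so ``quotienting by a compact normal subgroup'' does not reduce $G/H$ to a group. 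Moreover, the bi-Lipschitz dilations you extract are built from metric inversions composed with elements of $G$; no mechanism is offered by which they induce \emph{group automorphisms}, which is what Siebert's theorem requires.

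The paper sidesteps this entirely. After reaching $Y\cong(G/H,d_{SF})$ with a sub-Finsler metric via Berestovskii, it applies Mitchell's theorem: the metric tangent of such a homogeneous space at any point is a Carnot group $\mathbb{G}$ with its Carnot--Carath\'eodory metric. Inversion invariance then enters only through \rf{T:QSS}, which shows that an inversion-invariant bi-Lipschitz homogeneous space is quasi-self-similar and hence bi-Lipschitz equivalent to any of its own tangents. Chaining these gives $X\simeq\mathbb{G}$. Thus the Carnot structure comes from Mitchell's tangent-cone theorem applied to $G/H$, not from Siebert applied to $G$; the paper never needs to promote dilations to group automorphisms or to identify $X$ with a group directly.
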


We focus attention on inversion invariant bi-Lipschitz homogeneity primarily due to its role in the $1$-dimensional case. In particular, we have the following result from \cite{Freeman-iiblh}. Recall that a quasi-line (or quasi-circle) is a quasisymmetric image of the real line (or unit circle).

\begin{fact}\label{F:Q-regular}
Suppose $\Gamma$ is a proper and doubling metric space homeomorphic to the real line (or unit circle). $\Gamma$ is an Ahlfors $Q$-regular quasi-line (or quasi-circle) if and only if $\Gamma$ is inversion invariant bi-Lipschitz homogeneous.  
\end{fact}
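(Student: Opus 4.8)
The plan is to give the argument for spaces homeomorphic to the real line; the quasi-circle case is analogous and, in any case, is linked to the line case by a sphericalization correspondence: the sphericalization $\widehat{\Gamma}=\Gamma\cup\{\infty\}$ (the one-point completion carrying an inverted metric) of an Ahlfors $Q$-regular quasi-line is an Ahlfors $Q$-regular quasi-circle, and this operation transports inversion invariant bi-Lipschitz homogeneity back and forth. So fix $\Gamma$ proper, doubling, and homeomorphic to $\mfR$.

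For the implication ``Ahlfors $Q$-regular quasi-line $\Rightarrow$ inversion invariant bi-Lipschitz homogeneous'', I would pass to $\widehat{\Gamma}$, a bounded Ahlfors $Q$-regular quasi-circle, and produce an abundant, uniformly bi-Lipschitz family of self-homeomorphisms of it. Building these maps is the heart of the matter: a $Q$-regular quasi-arc carries a Christ-type dyadic decomposition into arcs of geometrically decreasing, mutually comparable diameters, and $Q$-regularity converts this combinatorial self-similarity into honest metric self-similarity (comparability of $\diam$ and of $\mathcal{H}^Q$ across the pieces of a given generation), so that the natural ``shift'' and ``inversion'' maps relative to the decomposition are realized as homeomorphisms whose bi-Lipschitz constants depend only on the regularity and quasisymmetry data. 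The group they generate acts transitively on a dense set of special points; properness then lets one remove the exceptional points by a limiting argument.

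For the reverse implication the plan splits in two. First, the quasi-line conclusion: as $\Gamma$ is doubling and proper, by the classical bounded-turning characterization of quasi-arcs it is enough to show that $\Gamma$ is linearly connected, i.e.\ that the arc joining $x$ and $y$ has diameter $\lesssim d(x,y)$ uniformly. Bi-Lipschitz homogeneity supplies this at small scales (transport a witnessing arc based at a fixed point onto the given pair), and inversion invariance promotes it to all scales, large-scale linear connectivity near $\infty$ becoming small-scale linear connectivity near the adjoined point in $\widehat{\Gamma}$. Second, Ahlfors regularity: a transitive, uniformly bi-Lipschitz action makes any two balls of equal radius comparable, and inversion invariance makes one scale comparable to every other; inserting this into a standard packing/covering estimate for the Hausdorff measure at the (thereby well-defined, finite, and positive) Hausdorff dimension $Q$ gives $\mathcal{H}^Q(B(x,r))\simeq r^Q$ with a constant independent of $x$ and $r$.

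The step I expect to be the main obstacle lies in the forward implication: manufacturing a genuinely \emph{uniformly} bi-Lipschitz group action out of metric hypotheses alone. Quasisymmetric self-maps of a $Q$-regular quasi-circle are plentiful, but bounding their bi-Lipschitz constants uniformly forces one to exploit $Q$-regularity to pin down lengths and measures, and assembling local bi-Lipschitz data into a global self-homeomorphism without degrading those constants is delicate. A secondary, more technical concern is keeping all quantitative constants under control through the sphericalization passages that relate the line and circle statements.
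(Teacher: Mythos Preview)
The paper does not prove this statement. It is recorded as a Fact and attributed to \cite{Freeman-iiblh} (``we have the following result from \cite{Freeman-iiblh}''), with no argument given here, so there is no proof in the present paper against which to compare your proposal.

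That said, a couple of remarks on your sketch in light of what the paper \emph{does} contain. For the reverse implication, your two steps are well aligned with the machinery developed here: the claim that ``inversion invariance makes one scale comparable to every other'' is precisely the content of \rf{T:QSS} (quasi-self-similarity), and the Ahlfors regularity conclusion is exactly \rf{F:ARLLC}, likewise cited from \cite{Freeman-iiblh}; the bounded-turning portion via the Tukia--V\"ais\"al\"a characterization is the standard route. For the forward implication, note that the statement of the Fact does not require homogeneity \emph{with respect to a group}, only uniform bi-Lipschitz homogeneity of $\Gamma$ and of its inversion; so you are aiming for more than is strictly necessary. Producing a uniformly bi-Lipschitz \emph{family} (not a group) of self-maps already suffices, which somewhat lightens the obstacle you identify, since you need not control constants under arbitrary composition.
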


Thus we obtain a characterization of inversion invariant bi-Lipschitz homogeneity for certain $1$-dimensional metric spaces. We view \rf{T:Carnot} as a step towards a characterization of this property in higher dimensions.

Regarding the assumption of the doubling property in \rf{T:Carnot}, we point out that Le Donne has shown that locally bi-Lipschitz homogeneous geodesic metric surfaces are locally doubling (\cite[Proposition 3.7]{LeDonne-blh}). 

\medskip
One might ask whether or not all Carnot groups are inversion invariant bi-Lipschitz homogeneous, but the answer to this question is not immediately clear. For example, the usual Heisenberg group satisfies this property via Kor\'anyi inversion. However, by results from \cite{CDKR-Heisenberg} not all Carnot groups possess such an inversion.

\medskip
Recall that when a Carnot group is of step greater than $1$, Hausdorff dimension strictly exceeds topological dimension. This observation provides the following corollary, which offers one way of characterizing $\mfR^n$ up to bi-Lipschitz homeomorphisms.

\begin{corollary}\label{C:Rn}
Suppose $X$ is doubling and proper metric space of Hausdorff dimension $n$ that is homeomorphic to $\mfR^n$. Then $X$ is inversion invariant bi-Lipschitz homogeneous with respect to a group if and only if $X$ is bi-Lipschitz equivalent to $\mfR^n$.
\end{corollary}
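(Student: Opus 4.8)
The plan is to deduce this directly from Theorem \ref{T:Carnot} together with the dimension discrepancy between Carnot groups of step $>1$ and Euclidean space. One direction is immediate: if $X$ is bi-Lipschitz equivalent to $\mfR^n$, then since $\mfR^n$ is inversion invariant bi-Lipschitz homogeneous with respect to a group (the similarity group, or more simply because the standard M\"obius inversion of $\mfR^n$ conjugates translations to a transitive family of bi-Lipschitz maps fixing the condition), the property transfers across the bi-Lipschitz equivalence. Here I would invoke the stability of inversion invariant bi-Lipschitz homogeneity under bi-Lipschitz maps, as established in \cite{Freeman-iiblh}; this is the routine part.

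For the forward direction, suppose $X$ is doubling, proper, of Hausdorff dimension $n$, homeomorphic to $\mfR^n$, and inversion invariant bi-Lipschitz homogeneous with respect to a group. First I would check that $X$ is geodesic (or at least quasiconvex, and then pass to a bi-Lipschitz equivalent geodesic space): a proper, doubling, bi-Lipschitz homogeneous space homeomorphic to a manifold should be quasiconvex, and the inversion invariance hypothesis is preserved. (If the paper's conventions already build quasiconvexity into "inversion invariant bi-Lipschitz homogeneous," this step is vacuous.) Also note $X$ is unbounded since it is homeomorphic to $\mfR^n$. Then Theorem \ref{T:Carnot} applies and yields a bi-Lipschitz homeomorphism $f:\mathbb{G}\to X$ for some Carnot group $\mathbb{G}$ with its Carnot--Carath\'eodory metric.

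Now the key step: since $f$ is bi-Lipschitz, $\mathbb{G}$ and $X$ have the same Hausdorff dimension, namely $n$; and since $f$ is in particular a homeomorphism, $\mathbb{G}$ is homeomorphic to $X$, hence to $\mfR^n$, so the topological dimension of $\mathbb{G}$ is $n$. But for a Carnot group, the Hausdorff dimension (with respect to the Carnot--Carath\'eodory metric) equals $\sum_{i\geq 1} i\,\dim V_i$ where $\Lie(\mathbb{G})=\bigoplus_i V_i$ is the stratification, while the topological dimension equals $\sum_{i\geq 1}\dim V_i$. These agree if and only if $V_i=0$ for all $i\geq 2$, i.e. the group has step $1$, i.e. $\mathbb{G}\cong\mfR^n$ as a Carnot group with Euclidean metric. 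Composing, $f:\mfR^n\to X$ is the desired bi-Lipschitz equivalence. The main obstacle is essentially bookkeeping — confirming that the hypotheses of Theorem \ref{T:Carnot} (unbounded, proper, doubling, geodesic) are all met, in particular geodesy if it is not already part of the standing definition — after which the dimension count closes the argument cleanly.
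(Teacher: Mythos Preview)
Your overall strategy matches the paper's: verify the hypotheses of Theorem~\ref{T:Carnot}, apply it, and then use the coincidence of Hausdorff and topological dimension to force the Carnot group to be abelian, hence $\mfR^n$. The dimension-counting argument you give is exactly right and is how the paper concludes.

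However, you have mislabeled the hard step as ``bookkeeping.'' The only hypothesis of Theorem~\ref{T:Carnot} that is not immediate is geodesy/quasiconvexity, and your proposed justification --- ``a proper, doubling, bi-Lipschitz homogeneous space homeomorphic to a manifold should be quasiconvex'' --- is not a known fact and is precisely where the work lies. The paper establishes quasiconvexity through a chain of three nontrivial ingredients: first, Fact~\ref{F:ARLLC} (from \cite{Freeman-iiblh}) gives that $X$ is Ahlfors $Q$-regular, and since the Hausdorff dimension is $n$ this means Ahlfors $n$-regular; second, Lemma~\ref{L:contractible} (which itself depends on the quasi-self-similarity Theorem~\ref{T:QSS}) upgrades local contractibility of the manifold $X$ to \emph{linear} local contractibility; third, Semmes's theorem (Fact~\ref{F:Semmes}, \cite[Theorem B.6]{Semmes-curves}) says that an Ahlfors $n$-regular, linearly locally contractible $n$-manifold is quasiconvex. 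Only then does Hopf--Rinow let you pass to a geodesic metric and invoke Theorem~\ref{T:Carnot}. Note in particular that the inversion-invariance hypothesis is used here (via Theorem~\ref{T:QSS} feeding into Lemma~\ref{L:contractible}), not only inside Theorem~\ref{T:Carnot}; plain bi-Lipschitz homogeneity is not enough for this route.
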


The proof of \rf{T:Carnot} utilizes arguments from \cite{LeDonne-tangents} and relies on the following theorem, which may be of interest in its own right. A space is \textit{quasi-self-similar} provided that, up to bi-Lipschitz equivalence, the space is invariant  under rescalings (see \rf{S:inversion} for a precise definition). 

\begin{theorem}\label{T:QSS}
Suppose $X$ is a proper, connected, and inversion invariant bi-Lipschitz homogeneous space. Then $X$ is quasi-self-similar. 
\end{theorem}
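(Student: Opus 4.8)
The plan is to exploit the interplay between the two halves of inversion invariant bi-Lipschitz homogeneity: bi-Lipschitz homogeneity gives us many isometries-up-to-constant moving points around, and inversion invariance gives us, at each point, a bi-Lipschitz involution of the punctured space that swaps a neighborhood of that point with a neighborhood of infinity. Combining a dilation-type map near infinity (produced via inversion) with translation maps (produced via homogeneity), I expect to manufacture, for every scale factor $\lambda>0$ and every point $p\in X$, a bi-Lipschitz homeomorphism of $X$ onto itself that near $p$ behaves like scaling by $\lambda$, with bi-Lipschitz constant controlled uniformly in $\lambda$ and $p$. That uniform control is exactly the quasi-self-similarity assertion.

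Concretely, here is the order of steps. First I would fix a basepoint $o\in X$ and, for a point $x\in X$ with $d(o,x)=r$, apply the inversion $\iota_o$ centered at $o$: by the definition of inversion invariant bi-Lipschitz homogeneity $\iota_o$ is $L$-bi-Lipschitz as a self-map of $X\setminus\{o\}$ with respect to the inversion distance, and it sends the sphere of radius $r$ roughly to the sphere of radius $1/r$. Composing $\iota_o$ with a bi-Lipschitz self-homeomorphism $g$ of $X$ (from the group action) that moves $\iota_o(x)$ back to a fixed reference point, and then applying $\iota_o$ again, produces a map that fixes $o$ and scales distances near $o$ by a definite factor comparable to $r^2$ — this is the standard ``inversion $\circ$ translation $\circ$ inversion = dilation'' trick, and running it for a range of $r$ gives dilations of every scale. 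Second, I would transport these dilations centered at $o$ to dilations centered at an arbitrary $p$ by pre- and post-composing with group elements sending $p\leftrightarrow o$; since the group acts by uniformly bi-Lipschitz maps, the constants stay bounded. Third, I would check that the composite map is genuinely defined and bi-Lipschitz on all of $X$ (not merely near $p$): properness and connectedness are used here to control the behavior away from the center, ensuring the ``dilation'' extends to a global bi-Lipschitz homeomorphism rather than just a local one, and to patch the inversion-distance estimates back into estimates for the original metric $d$. Finally I would assemble these pieces into the precise definition of quasi-self-similarity given in \rf{S:inversion}, verifying that the bi-Lipschitz constant obtained depends only on the data $(L$, the homogeneity constant$)$ and not on $\lambda$ or $p$.

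The main obstacle I anticipate is the bookkeeping of constants when passing between the inversion distance and the original distance $d$, and when iterating the inversion–translation–inversion construction to reach arbitrary scale factors rather than just the single scale naturally produced by one inversion. A single inversion only gives a ``scaling by $r^2$ near a point at distance $r$'' type map; to get a dilation by a prescribed $\lambda$ at a prescribed point with uniform constants, one must compose several such maps and group elements, and show the accumulated bi-Lipschitz constant does not blow up — presumably by arranging each factor in the composition to act on a fixed annulus of scales, so only boundedly many factors (independent of $\lambda$) are ever needed, or by a direct estimate showing the relevant constant is submultiplicative in a controlled way. Connectedness of $X$ should be what prevents the map from degenerating and guarantees the local scaling behavior propagates to a global statement; properness is what lets us invoke the group-action transitivity and the inversion structure at the level of actual points and balls. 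Once the uniformity of constants is nailed down, matching the output to the definition of quasi-self-similarity in \rf{S:inversion} should be routine.
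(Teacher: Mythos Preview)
Your high-level strategy --- manufacture dilations as ``inversion $\circ$ translation $\circ$ inversion'' --- matches the paper's, but your reading of the hypothesis is off in a way that breaks the argument. Inversion invariant bi-Lipschitz homogeneity does \emph{not} furnish a bi-Lipschitz involution $\iota_o$ of $X\setminus\{o\}$; the inversion $\phi_o$ of \rf{S:inversion} is the identity on the underlying set, merely replacing the metric $d$ by $d_o$. What the hypothesis actually supplies is that \emph{both} $(X,d)$ and $(X_o^*,d_o)$ are uniformly bi-Lipschitz homogeneous, as separate metric spaces. Your middle map $g$ is a bi-Lipschitz self-map of $(X,d)$; since $\phi_o$ is the identity on points, sandwiching $g$ between $\phi_o$ and $\phi_o^{-1}$ just returns $g$ and no scaling is produced. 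The paper instead draws the middle map from the bi-Lipschitz homogeneity of the \emph{inverted} space: one uses an $L$-bi-Lipschitz self-map $f$ of $(X,d)$ to place the ball into an annulus $A_d(p;s/\sqrt r,4s/\sqrt r)$, applies $\phi_p$, then uses an $L$-bi-Lipschitz self-map $g$ of $(X_p^*,d_p)$ to place the image into an annulus $A_{d_p}(p^*;L^2/s,4L^2/s)$, and finally applies $\phi_{p^*}$ together with the $16$-bi-Lipschitz identification of the double inversion with $(X,d)$. As written, your plan never invokes the second half of the hypothesis at all. (Note also that no group is assumed in this theorem, so the references to ``the group action'' should be dropped.)

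Two further corrections. Your worry about iterating to reach arbitrary scale factors is unnecessary: with the annulus radii chosen as above, the $s$-dependence cancels in the two applications of \rf{F:annulus-distortion} and the composite scales by $r$ with a bi-Lipschitz constant $M=M(L)$ independent of both $r$ and $s$; every $r>L^2$ is therefore obtained in a single step with no compounding of constants. Finally, the composite $\Phi$ so constructed is only an $M$-bi-Lipschitz embedding of the ball $B_{rd}(x;s)$ into $(X,d)$, not a global homeomorphism. The passage to a global $M$-bi-Lipschitz homeomorphism $(X,rd)\to(X,d)$ is obtained by letting $s\to\infty$ and extracting a limit via Arzel\`a--Ascoli and diagonalization; this, rather than any ``patching'' argument, is where properness is used.
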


In fact, \rf{T:QSS} is quantitative in the sense that the quasi-self-similarity constant depends only on the inversion invariant bi-Lipschitz homogeneity constant.  We emphasize that we are not assuming homogeneity with respect to a group in this theorem.

One might ask if quasi-self-similarity along with bi-Lipschitz homogeneity implies inversion invariant bi-Lipschitz homogeneity. In general, the answer is `no.' A modified version of the surface known as \textit{Rickman's Rug} is a quasi-self-similar and bi-Lipschitz homogeneous surface in $\mfR^3$, but its inversion is not bi-Lipschitz homogeneous (see \rf{X:rug}). 

\medskip
We conclude this section by emphasizing that the proofs of \rf{T:R2} and \rf{T:Carnot} heavily rely on the group structure contained in their assumptions. It would be interesting to know if different proofs exist that do not use this assumption. For example, the following question flows naturally from \rf{T:R2} above.

\begin{question}\label{Q:R2}
Suppose $X$ is a linearly locally connected, Ahlfors 2-regular, and proper metric space that is homeomorphic to $\mfR^2$. If $X$ is uniformly bi-Lipschitz homogeneous, is $X$ bi-Lipschitz equivalent to $\mfR^2$?
\end{question}

When considering \rf{Q:R2}, it may be helpful to examine spaces such as the surface $S$ constructed by Bishop in \cite{Bishop-surface}. The surface $S$ is the image of $\mfR^2\subset\mfR^3$ under a quasiconformal (and hence quasisymmetric) self homeomorphism of $\mfR^3$, so it is linearly locally connected. It is also Ahlfors $2$-regular. However, $S$ is not bi-Lipschitz equivalent to $\mfR^2$. It is relevant to our discussion to note that $S$ is \underline{not} bi-Lipschitz homogeneous (basically for the same reason that it is not bi-Lipschitz equivalent to $\mfR^2$). 

When considering a higher dimensional version of \rf{Q:R2}, one might examine spaces such as those constructed by Semmes in \cite{Semmes-params}. In particular, Semmes constructed an Ahlfors $3$-regular metric space $E$ that is the image of $\mfR^3\subset\mfR^4$ under a global quasiconformal self homeomorphism of $\mfR^4$, yet is not bi-Lipschitz equivalent to $\mfR^3$ (due to considerations related to Hausdorff measure and topology). Again one can show that that this space is \underline{not} bi-Lipschitz homogeneous.

For a compact example, consider the \textit{polyhedral Edwards sphere} $\Sigma^2H^3$. This is the double suspension of a non-simply connected homology 3-sphere $H^3$. We refer to \cite[Question 12]{HS-questions} for a brief discussion of this space in a context relevant to our considerations. It was pointed out by Sullivan that $\Sigma^2H^3$ is not bi-Lipschitz equivalent to the standard $5$-sphere $\mfS^5$, even though $\Sigma^2H^3$ is homeomorphic to $\mfS^5$ (by celebrated results of Edwards and Cannon). It is not very hard to show that $\Sigma^2H^3$ is \underline{not} bi-Lipschitz homogeneous. Indeed, points on the \textit{suspension circle} in $\Sigma^2H^3$ cannot be mapped off of the suspension circle by a bi-Lipschitz self homeomorphism of $\Sigma^2H^3$ (again for reasons related to Hausdorff measure and topology). 

In light of these various (non-)examples, it seems difficult to find a `nice' (i.e.\,homeomorphic to $\mfR^n$, Ahlfors $n$-regular, quasiconvex,...) space that is \underline{not} bi-Lipschitz equivalent to $\mfR^n$ but \underline{is} bi-Lipschitz homogeneous. 

\medskip
The author would like to thank Stephen Semmes for a helpful explanation of the polyhedral Edwards sphere, and Enrico Le Donne for an abundance of helpful feedback during the preparation of this paper. The author is also indebted to the anonymous referee for their helpful comments.

\medskip
\textit{Added in proof:} The equivalence between $(a)$ and $(b)$ in \rf{T:R2} can be established by means other than those employed in the present work and is true in greater generality. In particular, we point out the following fact:

\begin{fact}
Let $X$ be a proper, doubling, quasiconvex space homeomorphic to $\mfR^n$ that has Hausdorff dimension $n$. The space $X$ admits a transitive uniformly bi-Lipschitz group action if and only if $X$ is bi-Lipschitz equivalent to $\mfR^n$.
\end{fact}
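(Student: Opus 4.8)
The plan is to prove the two implications separately, with the bulk of the work in the forward direction.

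\medskip

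\textbf{The easy direction.} Suppose $X$ is bi-Lipschitz equivalent to $\mfR^n$ via some homeomorphism $g\colon\mfR^n\to X$. Since $\mfR^n$ admits the transitive group action by translations (which are isometries, hence uniformly bi-Lipschitz with constant $1$), conjugating this action by $g$ yields a transitive action on $X$ by bi-Lipschitz homeomorphisms whose distortion is controlled by the bi-Lipschitz constant of $g$ squared. This is immediate and requires no hypotheses beyond the bi-Lipschitz equivalence.

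\medskip

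\textbf{The hard direction.} Suppose $X$ is proper, doubling, quasiconvex, homeomorphic to $\mfR^n$, has Hausdorff dimension $n$, and admits a transitive uniformly bi-Lipschitz group action $G$. The goal is to produce a bi-Lipschitz parameterization by $\mfR^n$. First I would observe that quasiconvexity together with properness and the transitive uniformly bi-Lipschitz action upgrades the local structure: near any point $X$ looks (up to uniform bi-Lipschitz distortion) the same as near any other point, so it suffices to understand a single neighborhood. The natural strategy is to show $X$ is a topological manifold with good metric properties and then invoke a bi-Lipschitz parameterization theorem. Concretely, I would try to establish that $X$ is \emph{linearly locally connected} and \emph{Ahlfors $n$-regular}: the doubling property plus the transitive uniformly bi-Lipschitz action should give upper $n$-regularity once one knows the measure of one small ball is comparable to $r^n$, and the Hausdorff-dimension-$n$ hypothesis together with topological dimension $n$ (the space is homeomorphic to $\mfR^n$) should prevent dimension drop and yield lower regularity; linear local connectivity should follow from quasiconvexity and homogeneity. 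Once $X$ is known to be an Ahlfors $n$-regular, linearly locally connected, quasiconvex space homeomorphic to $\mfR^n$, one is in position to apply known bi-Lipschitz parameterization results — in dimension $2$ this is exactly the content of \rf{T:R2}, and in general one would cite the appropriate analogue (for instance via the theory developed around \cite{Semmes-params}, \cite{Wildrick-planes}) together with the fact that the transitive group action rules out the Semmes-type and Edwards-sphere-type obstructions discussed above.

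\medskip

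An alternative and perhaps cleaner route, which I would pursue in parallel, is to bypass uniformization theorems and instead use the self-similar/tangent-cone machinery: the transitive uniformly bi-Lipschitz action makes $X$ (uniformly) bi-Lipschitz homogeneous, and combined with a rescaling argument as in \rf{T:QSS} and \cite{LeDonne-tangents}, one could try to show every tangent cone of $X$ is a homogeneous space of Hausdorff dimension $n$ which is homeomorphic to $\mfR^n$, hence (by a Carnot-group-type classification, as in the proof of \rf{T:Carnot}, but now forced to be step $1$ because Hausdorff and topological dimension agree) bi-Lipschitz to $\mfR^n$ with uniform constants; a Reifenberg-type gluing or a direct limiting argument then transfers this to $X$ itself.

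\medskip

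\textbf{Main obstacle.} The crux is establishing Ahlfors $n$-regularity — specifically the \emph{lower} mass bound — from the hypotheses, since doubling alone only gives an upper bound of the form $\mu(B(x,r))\lesssim r^s$ for the (finite) doubling dimension $s$, and one must use the interplay of topological dimension $n$, Hausdorff dimension $n$, quasiconvexity, and homogeneity to pin the exponent to exactly $n$ and to get two-sided bounds. Equivalently, in the tangent-cone approach, the obstacle is ruling out that a tangent cone collapses or fails to be $\mfR^n$; this is where the Hausdorff-dimension hypothesis is essential, and making the dimension-counting argument rigorous in the abstract metric setting (rather than assuming an a priori embedding) is the delicate point.
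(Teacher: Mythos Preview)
Your easy direction is fine. Both of your hard-direction strategies, however, have genuine gaps, and neither matches the paper's route.

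Your first approach (establish Ahlfors $n$-regularity and linear local connectivity, then invoke a bi-Lipschitz parameterization theorem) only works when $n=2$: the Bonk--Kleiner--Wildrick uniformization that underlies \rf{T:R2} has no analogue for $n\geq 3$, and indeed Semmes' examples in \cite{Semmes-params} show that Ahlfors $n$-regular, linearly locally contractible quasiplanes need not be bi-Lipschitz to $\mfR^n$. Saying that ``the transitive group action rules out the Semmes-type obstructions'' is precisely what needs proof; you cannot reach it via uniformization. Your second approach (tangent cones) also breaks: you invoke a ``rescaling argument as in \rf{T:QSS}'', but \rf{T:QSS} requires \emph{inversion invariant} bi-Lipschitz homogeneity, which is not assumed here, so you have no quasi-self-similarity to identify $X$ with its tangent. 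And even if every tangent were bi-Lipschitz to $\mfR^n$, a ``Reifenberg-type gluing'' does not produce a bi-Lipschitz parameterization in this generality --- Reifenberg methods yield H\"older maps, or bi-Lipschitz only under additional Carleson-type hypotheses you have not verified.

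The paper's argument bypasses both tangent cones and parameterization theorems. One first replaces $d$ by the bi-Lipschitz equivalent distance $d_G(x,y):=\sup_{g\in G}d(g(x),g(y))$ (and its associated length metric, using quasiconvexity), so that $G$ acts by isometries on a geodesic space. Then the solution to Hilbert's Fifth Problem together with Berestovskii's theorem (as in \cite{LeDonne-geodesic}, \cite{LeDonne-blh}; see also the proof of \rf{T:Carnot}) identifies $X$ \emph{itself} --- not its tangents --- up to bi-Lipschitz equivalence with a homogeneous space $G/H$ carrying a sub-Finsler (hence, up to bi-Lipschitz, sub-Riemannian) metric. The hypothesis that Hausdorff dimension equals topological dimension $n$ forces the horizontal distribution to be the full tangent bundle, so the metric is actually Riemannian (cf.\ the proof of \rf{C:Rn}). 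Finally, a simply connected homogeneous Riemannian manifold of this type must be $\mfR^n$. Note that your ``main obstacle'' of establishing Ahlfors $n$-regularity from scratch is a red herring for this route: regularity plays no role once Berestovskii's structural theorem is applied directly to $X$.
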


This fact can be verified by the use of the solution to Hilbert's Fifth Problem and work of Berestovskii pertaining to isometrically homogeneous geodesic spaces. One uses this theory to show that $X$ is bi-Lipschitz equivalent to an isometrically homogeneous sub-Riemannian manifold (the argument is referenced in \cite[pg. 569]{LeDonne-geodesic} and \cite[pg. 783]{LeDonne-blh}). Due to the assumption that the Hausdorff dimension is $n$, the metric must be Riemannian (see the proof of \rf{C:Rn}). The only simply connected manifold of this type is $\mfR^n$ itself. We prove \rf{T:R2} by very different means. We do not pass to isometric homogeneity in order to utilize Lie group theory. Instead, we work within the theory of quasisymmetric mappings, two-dimensional quasiconformal groups, and $A_\infty$ weights. 

Of course, \rf{C:Rn} can also be proved by the same observations, thus rendering the assumption of inversion invariance unnecessary. We include this assumption in order to demonstrate some consequences of \rf{T:QSS}. 

\section{Preliminaries}\label{S:prelims}

Given two numbers $A$ and $B$, we write $A\eqx_C B$ to indicate that $C^{-1}A\leq B\leq CA$, where $C$ is  independent of $A$ and $B$. When the quantity $C$ is understood or need not be specified, we simply write $A\eqx B$. 

Given a metric space $(X,d)$, for $r>0$ and $x\in X$ we write
\[B_d(x;r):=\{y\in X:d(x,y)<r\}\]
to denote an open ball. For $0<r<s$, spheres and annuli are written as
\[S_d(x;r):=\{y\in X:d(x,y)=r\}\]
\[A_d(x;r,s):=\{y\in X: r<d(x,y)<s\}.\]
We omit subscripts in the above notation when no confusion is possible. 

Given $1\leq L<+\infty$, an embedding $f:X\to Y$ is \textit{$L$-bi-Lipschitz} provided that for all points $x,y\in X$ we have
\[L^{-1}d_X(x,y)\leq d_Y(f(x),f(y))\leq L\,d_X(x,y).\]
Two spaces $X$ and $Y$ are \textit{$L$-bi-Lipschitz equivalent} if there exists an $L$-bi-Lipschitz homeomorphism between the two spaces. A space $X$ is \textit{bi-Lipschitz homogeneous} if there exists a collection $\mathcal{F}$ of bi-Lipschitz self-homeomorphisms of $X$ such that for every pair $x,y\in X$ there exists $f\in \mathcal{F}$ with $f(x)=y$. When we can take every map in $\mathcal{F}$ to be $L$-bi-Lipschitz we say that $X$ is $L$-bi-Lipschitz homogeneous, or \textit{uniformly bi-Lipschitz homogeneous} when the particular constant is not important. When the collection $\mathcal{F}$ forms a group, we say that $X$ is bi-Lipschitz homogeneous \textit{with respect to a group}. 


\medskip
A metric space $(X,d)$ is \textit{quasiconvex} provided that there exists a constant $1\leq C<+\infty$ such that any two points $x,y\in X$ can be joined by a path $\gamma$ whose length is no greater than $C\,d(x,y)$. Note that a quasiconvex space is bi-Lipschitz equivalent to a length space. When the space is also complete and \textit{proper} (closed balls are compact) it is bi-Lipschitz equivalent to a geodesic space: there exist length minimizing paths between any two points.

For $Q>0$, a space $(X,d)$ is \textit{Ahlfors $Q$-regular} provided that any ball $B(x;r)\subset X$ has Hausdorff measure comparable to $r^Q$. More precisely, writing $\mathcal{H}^Q$ to denote the usual $Q$-dimensional Hausdorff measure, we have $\mathcal{H}^Q(B(x;r))\eqx r^Q$, where the comparability is independent of $x$ and $r$. 

For $\lambda>1$, we say that $X$ is \textit{$\lambda$-linearly locally connected} provided that for all $a\in X$ and $0<r<\diam(X)$ we have (see, for example, \cite[p. 787]{Wildrick-planes})\\
\begin{enumerate}
  \item{For each pair of distinct points $\{x,y\}\subset B(a;r)$ there exists a continuum $E\subset B(a;\lambda r)$ containing $\{x,y\}$.}
  \item{For each pair of distinct points $\{x,y\}\subset X\setminus B(a;r)$ there exists a continuum $E\subset X\setminus B(a;r/\lambda)$ containing $\{x,y\}$.}\\
\end{enumerate}

A metric space $(X,d)$ is \textit{metric doubling} provided that there exists some $1\leq N<+\infty$ such that every ball $B(x;r)$ can be covered by at most $N$ balls of radius $r/2$. Note that $N$ is independent of $x$ and $r$.

\section{Strong $A_\infty$ Weights}\label{S:weights}

In order to prove \rf{T:R2} we make use of the theory that is outlined in \cite{BHS-QC}. In particular, we make use of \textit{metric doubling measures} arising from \textit{strong $A_\infty$ weights}. For a more thorough treatment we refer the reader to \cite{DS-infinity} and \cite{Semmes-strong}. Here we provide a few necessary definitions.

A \textit{weight} $\omega$ is a nonnegative locally integrable function on $\mfR^n$. A weight $\omega$ is called an \textit{$A_\infty$ weight} provided that there exist contants $\varepsilon>0$ and $1\leq C<+\infty$ such that 
\[\left(\intavg_B\omega^{1+\varepsilon}(x)d\mathcal{H}^n(x)\right)^{1/(1+\varepsilon)}\leq C\,\intavg_B\omega(x)d\mathcal{H}^n(x).\]
Here $B$ is any ball in $\mfR^n$ and the barred integral sign indicates the integral average as follows:
\[\intavg_E f(x)d\mathcal{H}^n(x):=\frac{1}{\mathcal{H}^n(E)}\int_E f(x)d\mathcal{H}^n(x).\]

A function $\delta:X\times X\to[0,+\infty)$ is a \textit{quasidistance} (sometimes called a \textit{quasimetric}) provided that it is symmetric, vanishes precisely on the diagonal of $X\times X$, and satisfies the following weak version of the triangle inequality: There exists a constant $1\leq C<+\infty$ such that for all $x,y,z\in X$, 
\[\delta(x,z)\leq C(\delta(x,y)+\delta(y,z)).\]

Suppose that $\mu$ is a \textit{doubling measure} on $\mfR^n$. That is, $\mu$ is non-trivial and there exists a constant $1\leq D<+\infty$ such that for every $B(x;r)\subset\mfR^n$ we have $\mu(B(x;2r))\leq D\mu(B(x;r))$. Given such a measure, we define a quasidistance 
\[\delta_\mu(x,y):=\mu(B_{xy})^{1/n}.\]
Here $B_{xy}=B(x;|x-y|)\cup B(y;|x-y|)$.

We say that $\mu$ is a \textit{metric doubling measure} provided that there exists a distance function $d$ on $\mfR^n$ and a constant $1\leq C<+\infty$ such that for all $x,y\in \mfR^n$, 
\[C^{-1}d(x,y)\leq \delta_\mu(x,y)\leq C\,d(x,y).\]

By results from \cite{DS-infinity} (see also \cite[Proposition 3.4]{Semmes-strong}) a metric doubling measure $\mu$ has an $A_\infty$-density $\omega$ such that $d\mu(x)=\omega(x)d\mathcal{H}^n(x)$. We refer to densities $\omega$ which arise in this way as \textit{strong $A_\infty$ weights}. 

The preceding paragraphs allow us to conclude that for each strong $A_\infty$ weight $\omega$ there exists a corresponding metric space $(\mfR^n,d_\mu)$, where $d_\mu$ is comparable to the quasidistance defined via the measure $\mu(E):=\int_E\omega(x)d\mathcal{H}^n(x)$. 

In the introduction of \cite{BHS-QC} it is pointed out that a bijection $f:\mfR^2\to(\mfR^2,d_\mu)$ is bi-Lipschitz if and only if $f:\mfR^2\to\mfR^2$ is quasiconformal and $Jf$ is almost everywhere comparable to $\omega$. This idea immediately yields $(a)\Rightarrow(b)$ in \rf{C:QC}. The same idea also yields $(b)\Rightarrow(a)$. To see this, suppose that one is given a strong $A_\infty$ weight $\omega$ on $\mfR^2$ for which $(\mfR^2,d_\mu)$ is uniformly bi-Lipschitz homogeneous with respect to a group. By observations made in \cite[pg. 342-343]{Semmes-params} one can verify that $(\mfR^2,d_\mu)$ is quasisymmetrically equivalent to $\mfR^2$ and Ahlfors $2$-regular. Therefore, by \rf{T:R2}, there exists a bi-Lipschitz homeomorphism $f:\mfR^2\to(\mfR^2,d_\mu)$. This allows us to conclude that $\omega$ is almost everywhere comparable to $Jf$. 

With these thoughts in mind we record the following lemma.

\begin{lemma}\label{L:dbling-bl}
Suppose $\omega$ is a strong $A_\infty$ weight on $\mfR^2$ and $d_\mu$ is a distance comparable to the quasidistance defined via $\omega$. A bijection $f:(\mfR^2,d_\mu)\to(\mfR^2,d_\mu)$ is a bi-Lipschitz homeomorphism if and only if $f:\mfR^2\to\mfR^2$ is a quasiconformal homeomorphism and for almost every $x\in \mfR^2$ we have $\omega(x)\eqx\omega(f(x))Jf(x)$. The relevant constants depend only on each other.
\end{lemma}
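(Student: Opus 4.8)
The plan is to transfer the statement across the identity map $\id:(\mfR^2,d_\mu)\to\mfR^2$, where the target carries the Euclidean metric, and then invoke the facts recorded in the introduction of \cite{BHS-QC} that were cited just before the lemma. The key observation is that the identity is a quasisymmetric homeomorphism from $(\mfR^2,d_\mu)$ to $\mfR^2$ (this is exactly the content of the observation attributed to \cite[pg. 342-343]{Semmes-params}, since $d_\mu$ is comparable to a quasidistance built from an $A_\infty$-measure), and that quasiconformality and the local conformal-Jacobian condition are invariant under the change of metric from $d_\mu$ to the Euclidean one up to adjusting constants. So the bulk of the work is bookkeeping: show that a bijection $f$ of $(\mfR^2,d_\mu)$ is $L$-bi-Lipschitz if and only if, viewed as a self-map of Euclidean $\mfR^2$, it is $K$-quasiconformal with $\omega(x)\eqx_C\omega(f(x))Jf(x)$ a.e., with $K$ and $C$ controlled by $L$ and the data of $\omega$, and conversely.

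First I would set up the dictionary between the two metrics. Write $d$ for the Euclidean metric and recall $d_\mu(x,y)\eqx \delta_\mu(x,y)=\mu(B_{xy})^{1/2}$ with $d\mu=\omega\,d\H^2$. A bijection $f:(\mfR^2,d_\mu)\to(\mfR^2,d_\mu)$ is bi-Lipschitz iff it distorts $\delta_\mu$ boundedly, i.e.\ $\mu(B_{f(x)f(y)})\eqx\mu(B_{xy})$ for all $x,y$. Since $\mu$ is a metric doubling measure and hence (by the cited David--Semmes results) induces a metric comparable to $\delta_\mu$, the map $f$ is then automatically a quasisymmetric self-homeomorphism of $\mfR^2$: bi-Lipschitz maps are quasisymmetric, and quasisymmetry is a property intrinsic to the conformal gauge, so $f:\mfR^2\to\mfR^2$ is quasisymmetric, hence quasiconformal in the plane. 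This handles the "forward'' direction at the level of regularity of $f$.

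Next I would pin down the Jacobian condition. For $f:\mfR^2\to\mfR^2$ quasiconformal, $f$ is a.e.\ differentiable and $Jf>0$ a.e.; then $f$ pulls back $\mu$ to the measure with density $\omega(f(x))Jf(x)$. The statement that $f$ is bi-Lipschitz for $d_\mu$ says precisely that the two measures $\mu$ and $f^{*}\mu$ assign comparable mass to every ball $B_{xy}$ (equivalently, to every Euclidean ball, by the doubling property and a standard chaining/covering argument relating $B_{xy}$ to $B(x;|x-y|)$). By the Lebesgue differentiation theorem applied to these doubling measures, comparability of masses on all balls is equivalent to a.e.\ comparability of densities, i.e.\ $\omega(x)\eqx\omega(f(x))Jf(x)$ a.e.; tracking constants through the differentiation theorem and the doubling constants gives the quantitative dependence claimed. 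Conversely, if $f$ is quasiconformal with this a.e.\ density comparison, integrating recovers $\mu(B)\eqx f^{*}\mu(B)=\mu(f(B))$ on balls, hence $\delta_\mu(f(x),f(y))\eqx\delta_\mu(x,y)$, hence $f$ is bi-Lipschitz for $d_\mu$.

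The main obstacle I expect is the passage between "masses of balls are comparable'' and "densities are comparable a.e.'' with \emph{uniform} control of constants, and in particular justifying it without circularity: one must use that both $\mu$ and $f^{*}\mu$ are doubling (the latter because $f$ is quasiconformal, so it maps balls to sets of bounded eccentricity and $Jf\in A_\infty$) so that the Lebesgue differentiation theorem applies and the comparison of averages on balls shrinking to $x$ transfers to the Radon--Nikodym derivatives. A secondary technical point is the comparison between the ``union ball'' $B_{xy}$ and a single Euclidean ball centered near $x$ of comparable radius, which is where the doubling constant of $\mu$ enters; this is routine but needs to be stated to make the constant dependence explicit. Everything else is a direct invocation of the relationship between bi-Lipschitz and quasiconformal maps noted after \rf{C:QC}.
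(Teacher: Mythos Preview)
Your proposal is correct and follows the same overall architecture as the paper's proof: in both directions one reduces the bi-Lipschitz condition on $d_\mu$ to the statement $\mu(B)\eqx\mu(f(B))$ on Euclidean balls, and then passes between integrated and pointwise comparability of $\omega$ and $(\omega\circ f)Jf$ via Lebesgue differentiation (the paper phrases this last step as ``since this holds for all $x,y$'' rather than naming the theorem, but it is the same move).

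The one genuine difference is in how you establish that $f$ is quasisymmetric as a self-map of Euclidean $\mfR^2$ in the forward direction. You argue softly: the identity $(\mfR^2,d_\mu)\to(\mfR^2,|\cdot|)$ is quasisymmetric (the Semmes observation), bi-Lipschitz maps are quasisymmetric, and quasisymmetry is stable under composition, so $f=\id\circ f\circ\id^{-1}$ is Euclidean-quasisymmetric. The paper instead argues directly: from $\mu(B(f(x);s))\eqx\mu(B(f(x);t))$ (where $s,t$ are the min/max radial distortions of a sphere) it invokes a lemma of Semmes on doubling measures to bound $t/s$. Your route is cleaner and avoids that external reference; the paper's route is more self-contained in that it never explicitly appeals to the quasisymmetry of the identity. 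Either way, once Euclidean quasisymmetry is in hand, both proofs need (and both supply) the step $\mu(B(f(x);|f(x)-f(y)|))\eqx\mu(f(B(x;|x-y|)))$ that you flagged as a secondary technical point; the paper records this as \rf{E:image-measure}.
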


\begin{proof}
Assume that $f:(\mfR^2,d_\mu)\to(\mfR^2,d_\mu)$ is a bi-Lipschitz homeomorphism. So for $x,y\in \mfR^2$, $d_\mu(f(x),f(y))\eqx d_\mu(x,y)$. Using the doubling property of $\mu$,
\[d_\mu(x,y)\eqx \delta_\mu(x,y)=\mu(B_{xy})^{1/2}\eqx\mu(B(x;|x-y|))^{1/2}.\]
It follows from the bi-Lipschitz property of $f$ that
\begin{equation}\label{E:ball-measure} 
\mu(B(x;|x-y|))\eqx\mu(B(f(x);|f(x)-f(y)|)).
\end{equation}
Define $r:=|x-y|$ and set
\[s:=\min_{z\in S(x;r)}|f(x)-f(z)|\qquad t:=\max_{z\in S(x;r)}|f(x)-f(z)|.\]
By \rf{E:ball-measure} we have $\mu(B(f(x);s))\eqx_C\mu(B(f(x);t))$, for some $C$ depending only on the doubling constant for $\mu$ and the bi-Lipschitz constant for $f$. Furthermore, by a lemma of Semmes (\cite[Lemma B.4.7, p.420]{Gromov-structures}) we obtain constants $0<\alpha$ and $1\leq D<+\infty$ independent of $s,t$ and $x$ such that 
\[D^{-1}\left(\frac{t}{s}\right)^\alpha\leq\frac{\mu(B(f(x);t))}{\mu(B(f(x);s))}\leq C.\]
In particular, $t\leq (CD)^{1/\alpha}s$, and so $f:\mfR^2\to\mfR^2$ is quasisymmetric.

Using the quasisymmetry of $f$ and the doubling property of $\mu$ it is straightforward to verify that
\begin{equation}\label{E:image-measure}
\mu(B(f(x);|f(x)-f(y)|))\eqx \mu(f(B(x;|x-y|))).
\end{equation}
It then follows from the definition of $\mu$, the fact that $f:(\mfR^2,d_\mu)\to(\mfR^2,d_\mu)$ is bi-Lipschitz, \rf{E:image-measure}, and the quasiconformality of $f:\mfR^2\to\mfR^2$ that
\begin{align*}
\int_{B(x;|x-y|)}\omega(z)d\mathcal{H}^2(z)&\eqx\int_{f(B(x;|x-y|))}\omega(z)d\mathcal{H}^2(z)\\
&=\int_{B(x;|x-y|)}\omega(f(z))Jf(z)d\mathcal{H}^2(z).
\end{align*}
Since this holds for all $x,y\in\mfR^2$, we conclude that for almost every $z\in\mfR^2$ we have $\omega(z)\eqx\omega(f(z))Jf(z)$, as desired.

\medskip
Conversely, assume that $f:\mfR^2\to\mfR^2$ is a quasiconformal homeomorphism such that for almost every $x\in\mfR^2$ we have $\omega(x)\eqx \omega(f(x))Jf(x)$. For points $x,y\in\mfR^2$, by the quasisymmetry of $f$ and the doubling property of $\mu$ we have
\[\delta_\mu(f(x),f(y))=\mu(B_{f(x)f(y)})^{1/2}\eqx\mu(f(B_{xy}))^{1/2}.\]
Then using our assumption we find that
\begin{align*}
\mu(f(B_{xy}))=\int_{f(B_{xy})}\omega(z)d\mathcal{H}^2(z)&=\int_{B_{xy}}\omega(f(z))Jf(z)d\mathcal{H}^2(z)\\
&\eqx\int_{B_{xy}}\omega(z)d\mathcal{H}^2(z)=\delta_\mu(x,y)^2.
\end{align*}
It follows that $d_\mu(f(x),f(y))\eqx d_\mu(x,y)$, as desired.
\end{proof}

\section{Quasihomogeneous and bi-Lipschitz maps}\label{S:QH}

Recall the definition of a quasisymmetric embedding $f:X\to Y$. There exists a homeomorphism $\eta:[0,+\infty)\to[0,+\infty)$ such that for all triples of distinct points $x,y,z\in X$ we have
\[\frac{d_Y(f(x),f(y))}{d_Y(f(x),f(z))}\leq\eta\left(\frac{d_X(x,y)}{d_X(x,z)}\right).\]

A \textit{quasihomogeneous} embedding $h:X\to Y$ is a special case of the above: There exists a homeomorphism $\eta:[0,+\infty)\to[0,+\infty)$ such that for every collection of four distinct points $x,y,z,w$ we have
\[\frac{d_Y(h(x),h(y))}{d_Y(h(z),h(w))}\leq \eta\left(\frac{d_X(x,y)}{d_X(z,w)}\right).\]
Such maps are natural to work with in the setting of bi-Lipschitz homogeneous spaces. For example, it is easy to check that if $X$ is $L$-bi-Lipschitz homogeneous then $f(X)$ is $\eta(L)$-bi-Lipschitz homogeneous. We also refer the reader to results such as \cite[Theorem E]{HM-blh} and \cite[Corollary 1.2]{Bishop-blh}.

The strategy underlying the following proof was communicated to the author by David Herron. For other results about the relationship between quasihomogeneous and bi-Lipschitz mappings, see \cite[Section 1.4]{Aseev02}. Note also that it is stated (without proof) in \cite{Shag93} that any quasihomogeneous mapping between open subsets of $\mathbb{R}^n$ is bi-Lipschitz. 

\begin{proof}[Proof of \rf{T:QHBL}]
Suppose $(X,d)$ is an Ahlfors $n$-regular metric space and $f:\mfR^n\to (X,d)$ is a homeomorphism. If $f$ is bi-Lipschitz, then it is trivially quasihomogeneous. Therefore, we focus on the reverse implication. Assume that $f$ is $\eta$-quasihomogeneous. We claim:
\begin{equation}\label{E:cubes}
\text{For every cube } Q\subset \mfR^n \text{ we have } \mathcal{H}^n(f(Q))\eqx\mathcal{H}^n(Q).
\end{equation}
Here the comparability is independent of $Q$. Since $f$ is quasisymmetric and $X$ is Ahlfors $n$-regular, the measure $\mu(E):=\mathcal{H}^n(f(E))$ is doubling. Furthermore, $d_\mu(x,y)\eqx \delta_\mu(x,y)$, where $\delta_\mu(x,y)$ is the quasidistance associated to $\mu$ and $d_\mu(x,y):=d(f(x),f(y))$ (compare with \cite[(1.17)]{BHS-QC}). Thus $\mu$ is a metric doubling measure on $\mfR^n$ with strong $A_\infty$ weight denoted by $\omega$, and $f:(\mfR^n,d_\mu)\to(X,d)$ is an isometry.

Assuming that \rf{E:cubes} is true, for every cube $Q\subset\mfR^n$ we have
\[\int_Q\omega(z)d\mathcal{H}^n(z)=\int_{f(Q)}d\mathcal{H}^n(z)\eqx\int_Qd\mathcal{H}^n(z).\]
This is enough to guarantee the existence of $1\leq C<+\infty$ such that $\omega(z)\eqx_C1$ almost everywhere in $\mfR^n$, which tells us that $\id:\mfR^n\to(\mfR^n,d_\mu)$ is bi-Lipschitz. Since $f:(\mfR^n,d_\mu)\to(X,d)$ is an isometry, we conclude that $f:\mfR^n\to (X,d)$ is bi-Lipschitz, as desired. Thus it suffices to verify \rf{E:cubes}.

To this end, we note that a quasihomogeneous map is necessarily quasisymmetric. Therefore, if $E$ is an open subset of $\mfR^n$ such that 
\[B(x;r)\subset E\subset B(x;Cr)\]
for some $r>0$ and $1\leq C<+\infty$, then $\mathcal{H}^n(f(E))\eqx |f(x)-f(y)|^n$ for any  $y\in S(x;r)$, where the comparability depends only on $C$ and $\eta$.  Here we use the $n$-regularity of $X$. Equivalently, for such $E$ we have 
\begin{equation}\label{E:measure}
\mathcal{H}^n(f(E))\eqx\diam(f(E))^n.
\end{equation}

Suppose that there exists some sequence of cubes $Q_i\subset\mfR^n$ with sidelengths $2^{-m_i}\to0$ and a sequence of positive real numbers $C_i\to+\infty$ such that 
\begin{equation}\label{E:blow-up}
\frac{\diam(f(Q_i))}{\diam(Q_i)}>C_i.
\end{equation}
Using quasihomogeneity, for \textit{every} cube $Q\subset\mfR^n$ with sidelength $2^{-m_i}$ we will have $\diam(f(Q))/\diam(Q)>C_i/C_0$ where $C_0$ depends only on $\eta$.  

Given any cube $Q^0$ of sidelength $2^0=1$, for $i>1$ we have $Q^0=\cup_jQ_j^{-m_i}$, where the cubes $\lbrace Q^{-m_i}_j\rbrace$ form a dyadic partition of $Q^0$ such that each $Q^{-m_i}_j$ has sidelength $2^{-m_i}$. Since $\mathcal{H}^n(f(E))=\int_E\omega(z)d\mathcal{H}^n(z)$ it is clear that $f$ preserves sets of $\mathcal{H}^n$-measure zero. Thus we have $\mathcal{H}^n(f(Q^0))=\sum_j\mathcal{H}^n(f(Q^{-m_i}_j))$. Using this along with \rf{E:measure} and \rf{E:blow-up},
\begin{align*}
\mathcal{H}^n(f(Q^0))&=\sum_j\mathcal{H}^n(f(Q^{-m_i}_j))\\
&\gex (C_i/C_0)^n\sum_j\mathcal{H}^n(Q^{-m_i}_j)=(C_i/C_0)^n\mathcal{H}^n(Q^0).
\end{align*}
Note that $\mathcal{H}^n(f(Q^0))<+\infty$. Since the right hand side of this inequality tends to infinity as $i\to\infty$, we obtain a contradiction. Similar arguments also yield a contradiction if we have $\diam(f(Q_i))/\diam(Q_i)$ tending to 0. Therefore, there exists some $1\leq C<+\infty$ such that any cube of sidelength $2^{-m}$ (for $m\in \mfN\cup\{0\}$) satisfies
\[C^{-1}\mathcal{H}^n(Q)\leq\mathcal{H}^n(f(Q))\leq C\,\mathcal{H}^n(Q)\]
Now consider a dyadic cube $Q^n$ of side length $2^n$ for some $n\in\mfN$. Then 
\[\mathcal{H}^n(f(Q^n))=\sum_j\mathcal{H}^n(f(Q^0_j))\eqx\sum_j\mathcal{H}^n(Q^0_j)=\mathcal{H}^n(Q^n).\]
Here $\{Q^0_j\}_j$ forms a dyadic partition of $Q^n$ by cubes of side length $2^0=1$.

Using quasihomogeneity and \rf{E:measure} we can see that the Hausdorff $n$-measure of \textit{any} cube will quasi-preserved as in \rf{E:cubes}.
\end{proof}

We remark that quasihomogeneous maps need not be bi-Lipschitz in general. For example, any (unbounded) bi-Lipschitz homogeneous Jordan line $\Gamma\subset\mfR^2$ has a quasihomogeneous parameterization $f:\mfR\to\Gamma$, but if the Hausdorff dimension of $\Gamma$ exceeds 1 it does not have a bi-Lipschitz parameterization (see \cite{Freeman-blh}). For an interesting 2-dimensional example, consider the embedding $f:\mfR^2\to\mfR^3$ constructed in \cite[Theorem 1.1]{Bishop-qh-surface}.

\section{Bilipschitz Images of the Euclidean Plane}\label{S:BL}

\begin{proof}[Proof of \rf{T:R2}]
First we prove $(a)\Rightarrow(b)$. Given our assumptions on $X$, by \cite[Theorem 1.2]{Wildrick-planes} there exists a quasisymmetric homeomorphism $f:\mfR^2\to X$. For any measurable set $E\subset\mfR^2$, define $\mu(E):=\mathcal{H}^2(f(E))$. Then $\mu$ is a metric doubling measure on $\mfR^2$ with constant depending only the quasisymmetry function for $f$, and $\mu$ has an $A_\infty$-density $\omega$ such that $d\mu(x)=\omega(x)d\mathcal{H}^2(x)$ (so $\omega$ is a strong $A_\infty$ weight). Let $d_\mu$ denote a distance function comparable to the quasidistance $\delta_\mu$ associated to $\mu$ (we could take $d_\mu(x,y):=d(f(x),f(y))$). Then $f:(\mfR^2,d_\mu)\to(X,d)$ is a bi-Lipschitz homeomorphism. 

By assumption, there exists a group $G$ of uniformly bi-Lipschitz self homeomorphisms acting transitively on $X$. Then the group $G':=f^{-1}Gf$ is a group of uniformly bi-Lipschitz self homeomorphisms acting transitively on $(\mfR^2,d_\mu)$. Let $1\leq L<+\infty$ denote the uniform bi-Lipschitz constant for maps in $G'$. As in \rf{L:dbling-bl}, the group $G'$ can be viewed as a group of uniformly quasiconformal self homeomorphisms acting transitively on $\mfR^2$. By \cite[Theorem 12.1]{Gehring-topics} (and references therein) there exists a quasiconformal self homeomorphism $h:\overline{\mfR^2}\to\overline{\mfR^2}$ and a M\"obius group $M$ such that $G'=h^{-1}Mh$. Let $g=h^{-1}\phi h$ denote an element of $G'$. By \rf{L:dbling-bl}, for almost every $x\in\mfR^2$ we have
\[\omega(x)\eqx\omega(g(x))Jg(x)=\omega(g(x))J(h^{-1}\phi h)(x).\]
Using the chain rule, for almost every $x\in\mfR^2$ we have
\[J(h^{-1}\phi h)(x)=\frac{J\phi(h(x))Jh(x)}{Jh(g(x))}.\]
Putting these statements together we conclude that for almost every $x\in\mfR^2$,
\begin{equation}\label{E:compare1}
\omega(x)\eqx Jh(x)J\phi(h(x))\frac{\omega(g(x))}{Jh(g(x))}.
\end{equation}

Note that (up to conjugation by an additional M\"obius map of $\overline{\mfR^2}$) we may assume that $M$ fixes $\infty$. Under this assumption we claim that $M$ consists only of rotations and/or translations. 

To verify this claim, let $\phi\in M$ be given. Since $\phi$ fixes $\infty$, it can be written in the form $\phi(z)=az+b$ for $a,b\in \mfC$ such that $a\not=0$. We consider the behavior of the iterated functions 
\[\phi^n(z)=\overbrace{\phi\circ \phi\circ \dots\circ \phi}^\text{$n$ times}(z).\] 
Suppose $\phi(z)=az+b$. If $a=1$ and $b\not=0$, then for each $z\in \mfC$, the sequence $(\phi^n(z))$ tends to $\infty$. When $a\not=1$, the map $\phi$ has a unique fixed point in $\mfC$ at $z_\phi:=b/(1-a)$. If $|a|>1$ then for each $z\not =z_\phi$ we have $\phi^n(z)\to\infty$. If $|a|<1$ then for each $z\in\mfC$ we have $\phi^n(z)\to z_\phi$. If $|a|=1$, then $\phi$ is simply a rotation of the plane about the fixed point.

Suppose there exists a map $\phi(z)=az+b$ in $M$ with $|a|\not=1$. Then (up to taking the inverse of $\phi$) we may assume that $|a|<1$. Let $x$ and $y$ denote two points in the plane. Defining $x_n:=\phi^n(x)$ and $y_n:=\phi^n(y)$, both $x_n$ and $y_n$ tend to the fixed point $z_\phi$; so $|x_n-y_n|\to0$. However, since each $g^n:=h^{-1}\phi^nh$ is an $L$-bi-Lipschitz map of $(\mfR^2,d_\mu)$, for each $n$ we have $d_\mu(x_n,y_n)\eqx d_\mu(x,y)$. This contradicts the fact that $\mfR^2$ is homeomorphic to $(\mfR^2,d_\mu)$, and we conclude that
\begin{equation}\label{E:unit}
\forall \phi\in M, \quad \phi(z)=az+b \text{ for } a,b\in \mfC \text{ s.t. }|a|=1.
\end{equation}
With \rf{E:unit} in mind, we return to \rf{E:compare1} to obtain
\begin{equation}\label{E:compare2}
\omega(x)\eqx Jh(x)J\phi(h(x))\frac{\omega(g(x))}{Jh(g(x))}=Jh(x)\frac{\omega(g(x))}{Jh(g(x))}.
\end{equation}
We emphasize that the comparability is independent of the map $g$.

We now conclude this portion of the proof using the transitivity of $G'$. Fix a point $x\in \mfR^2$ at which \rf{E:compare2} holds. For any other point $z\in\mfR^2$ at which \rf{E:compare2} holds, there exists a map $g_z\in G'$ such that $g_z(x)=z$. Since the comparability in \rf{E:compare2} is independent of the map $g$ we find that $\omega(z)/Jh(z)\eqx\omega(x)/Jh(x)$ up to a constant independent of $z$. Since $z$ was any point in $\mfR^2$ at which \rf{E:compare2} holds, we conclude that $\omega(z)\eqx Jh(z)$ at almost every point of $\mfR^2$, up to a constant independent of $z$. This implies that $h:(\mfR^2,d_\mu)\to\mfR^2$ is bi-Lipschitz, and so $h\circ f^{-1}:(X,d)\to\mfR^2$ is a bi-Lipschitz homeomorphism.

\medskip
Since $(b)\Rightarrow(c)$ is trivial, we proceed to prove $(c)\Rightarrow(a)$. Let $h:\mfR^2\to X$ denote a quasihomogeneous homeomorphism. For any $x,y\in X$, let $\phi$ denote a translation of $\mfR^2$ taking $h^{-1}(x)$ to $h^{-1}(y)$. Then $g:=h\circ \phi\circ h^{-1}:X\to X$ takes $x$ to $y$. To see that $g$ is bi-Lipschitz, let $z,w\in X$. By quasihomogeneity,  
\[d(h(\phi(h^{-1}(z))),h(\phi(h^{-1}(w))))\eqx d(h(h^{-1}(z),h(h^{-1}(w)))=d(z,w)\]
Note that we do not need to use the 2-regularity of $X$ in this implication.
\end{proof}

\section{Inversion Invariant Bilipschitz Homogeneity}\label{S:inversion}

In \cite{BK-QMH}, Bonk and Kleiner generalized the notion of chordal distance on the Riemann sphere to unbounded locally compact metric spaces. In \cite{BHX-invert}, Buckley, Herron, and Xie built on this notion to develop the concept of \textit{metric space inversions.} We record a few pertinent facts about such inversions. Define $\hat{X}:=X\cup\{\infty\}$. Given a base point $p\in X$ and any two points $x,y\in X_p:=X\setminus\{p\}$ we define:
\[i_p(x,y):=\frac{d(x,y)}{d(x,p)d(y,p)}\quad \text{and} \quad i_p(x,\infty):=\frac{1}{d(x,p)}\]
This does not define a distance function in general, but one can show that 
\[d_p(x,y):=\inf\left\{\sum_{i=0}^{k-1}i_p(x_i,x_{i+1}):x=x_0,\dots,x_k=y\in X_p\right\}\]
defines a distance such that for all $x,y\in \hat{X}_p$, 
\[\frac{1}{4}i_p(x,y)\leq d_p(x,y)\leq i_p(x,y).\]

We use the distance $d_p$ to define the \textit{metric inversion of $X$ at $p$}, denoted by $X_p^*:=(\hat{X}_p,d_p)$. The identity map from $(\hat{X}_p,d)$ to $X^*_p$ is written as $\phi_p:\hat{X}_p\to X^*_p$. For points $x\in X_p$, it is often convenient to write $x^*:=\phi_p(x)$. We write $p^*$ to denote $\phi_p(\infty)$. So for $x\in X_p$ we have $1/4d(x,p)\leq d_p(x^*,p^*)\leq1/d(x,p)$. 

The following elementary estimate will be useful (see \cite[pg. 848]{BHX-invert}).

\begin{fact}\label{F:annulus-distortion}
For $0<r<R<\diam(X)$ and $x,y\in A_d(p;r,R)$, we have
\[\frac{d(x,y)}{4R^2}\leq d_p(\phi_p(x),\phi_p(y))\leq\frac{d(x,y)}{r^2},\]
\end{fact}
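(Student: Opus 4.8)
The plan is to get the estimate by comparing $d_p$ with the pointwise quantity $i_p$ via the two-sided inequality $\tfrac14 i_p(x,y)\le d_p(x,y)\le i_p(x,y)$ already recorded above, and then to bound $i_p(x,y)$ itself. For $x,y\in A_d(p;r,R)$ we have $r<d(x,p)<R$ and $r<d(y,p)<R$, so directly from the definition $i_p(x,y)=\dfrac{d(x,y)}{d(x,p)\,d(y,p)}$ we read off
\[
\frac{d(x,y)}{R^2}\le i_p(x,y)\le \frac{d(x,y)}{r^2}.
\]
Combining the left half of this with $d_p\ge \tfrac14 i_p$ gives the lower bound $d_p(\phi_p(x),\phi_p(y))\ge \dfrac{d(x,y)}{4R^2}$, and combining the right half with $d_p\le i_p$ gives the upper bound $d_p(\phi_p(x),\phi_p(y))\le \dfrac{d(x,y)}{r^2}$. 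That is exactly the claimed two-sided estimate.

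The only point requiring a word of care is that $\phi_p(x)=x^*$ and $\phi_p(y)=y^*$ both lie in $X_p$ (not at $p^*$), so that $d_p(\phi_p(x),\phi_p(y))$ is governed by the finite formula for $i_p$ rather than the $i_p(\cdot,\infty)$ case; this is immediate since $x,y\in A_d(p;r,R)\subset X_p$. There is essentially no obstacle here: the statement is purely a matter of unwinding the definition of $i_p$ on the annulus and invoking the comparison between $d_p$ and $i_p$ quoted from \cite{BHX-invert}. One might optionally remark that the hypothesis $R<\diam(X)$ merely guarantees the annulus is nonempty and is not otherwise used.
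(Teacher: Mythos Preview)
Your argument is correct and is precisely the elementary verification one expects: bound $i_p$ on the annulus and then transfer to $d_p$ via the quarter-comparison $\tfrac14 i_p\le d_p\le i_p$. The paper itself does not supply a proof but simply cites \cite[pg.~848]{BHX-invert}, where exactly this computation appears; your proposal reproduces that reasoning faithfully.
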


Given a metric space $X$, we use the term \textit{inversion invariant bi-Lipschitz homogeneity} to describe the situation in which both $X$ and $X^*_p$ are uniformly bi-Lipschitz homogeneous. The space $X$ is $L$-inversion invariant bi-Lipschitz homogeneous if the bi-Lipschitz maps in the definition are uniformly $L$-bi-Lipschitz. Note that this definition is independent (up to a controlled change in $L$) of the basepoint $p$ (see \cite[Lemma 3.2]{BHX-invert}).

\medskip
We say that a metric space $X=(X,d)$ is $L$-\textit{quasi-self-similar} provided that for some $x\in X$ and every $s\geq1$ there exists an $L$-bi-Lipschitz homeomorphism $f_s:(X,sd,x)\to(X,d,x)$. We emphasize that $L$ does not depend on $s$.

\begin{proof}[Proof of \rf{T:QSS}]
Let $1\leq L<+\infty$ denote the inversion invariant bi-Lipschitz homogeneity constant for $X$, and let $r>L^2$ be fixed. We claim that for any $x\in X$ and $s>0$ there exists an $M$-bi-Lipschitz embedding of $B_{rd}(x;s)$ into $(X,d)$ fixing the point $x$, where $M$ is determined only by $L$.

To verify this claim, begin by choosing $x\in (X,d)$ and $s>0$. Note that as sets, $B_d(x;s/r)=B_{rd}(x;s)$. Fix a point $p\in X$. By assumption, there exists an $L$-bi-Lipschitz homeomorphism $f:X\to X$ such that $y:=f(x)\in S_d(p;2s/\sqrt{r})$. Since $r>L^2$, we find that 
\[f(B_d(x;s/r))\subset A_d(p;s/\sqrt{r},4s/\sqrt{r})\]
Now we apply $\phi_p$. Due to \rf{F:annulus-distortion} we obtain
\[B_{d_p}(y^*;1/(64Ls))\subset\phi_p\circ f(B_d(x;s/r))\subset B_{d_p}(y^*;L/s).\]
Here $y^*:=\phi_p(y)\in X^*_p$. By assumption, there exists an $L$-bi-Lipschitz map $g:X_p^*\to X_p^*$ such that $z^*:=g(y^*)\in S_{d_p}(p^*;2L^2/s)$. Then we note that
\[B_{d_p}(z^*;L^2/s)\subset A_{d_p}(p^*;L^2/s,4L^2/s).\]
Let $Z:=\hat{X}_p$, so $\phi_p(\infty)=p^*\in(Z,d_p)$. Slightly abusing notation, we write $d_{p^*}$ to denote $(d_p)_{p^*}$, and we apply $\phi_{p^*}:(\hat{Z}_{p^*},d_p)\to(\hat{Z}_{p^*},d_{p^*})$. Writing $z:=\phi_{p^*}(z^*)$, by way of \rf{F:annulus-distortion} we have
\begin{align*}
B_{d_{p^*}}(z;s/(4096L^6))\subset\phi_{p^*}\circ g \circ \phi_p\circ f(B_d(x;s/r))\subset B_{d_{p^*}}(z;s/L^2).
\end{align*}

Now we examine the map $\Phi:=\id^*\circ \phi_{p^*}\circ g\circ \phi_p\circ f:(X,d)\to(X,d)$. Here $\id^*$ is the identity map from $(\hat{Z}_{p^*},d_{p^*})$ to $(X,d)$. By \cite[Proposition 3.3]{BHX-invert} the map $\id^*$ is $16$-bi-Lipschitz. The above results yield
\[B_d(z;s/(10^6L^6))\subset\Phi(B_{d}(x;s/r))\subset B_d(z;16s/L^2).\]
Moreover, we find that for $a,b\in B_{d}(x;s/r)$, we have
\begin{align*}
d(\Phi(a),\Phi(b))&\eqx_{16}d_{p^*}(\phi_{p^*}\circ g\circ\phi_p\circ f(a),\phi_{p^*}\circ g\circ\phi_p\circ f(b))\\
&\eqx_{64L^4}s^2\,d_{p}(g\circ \phi_p\circ f(a),g \circ \phi_p\circ f(b))\\
&\eqx_{L}s^2\,d_p(\phi_p\circ f(a),\phi_p\circ f(b))\\
&\eqx_{64}r\,d(f(a),f(b))\\
&\eqx_{L}r\,d(a,b)
\end{align*}
Since $B_d(x;s/r)=B_{rd}(x;s)$, we conclude that $\Phi:B_{rd}(x;s)\to (X,d)$ is an $M$-bi-Lipschitz embedding, with $M$ determined only by $L$. Via one more $L$-bi-Lipschitz self-homeomorphism of $X$ taking $\Phi(x)\mapsto x$, our claim is verified.

The above claim allows us to prove that $(X,rd)$ is bi-Lipschitz equivalent to $(X,d)$. To see this, let $n\in \mfN$. The above claim yields an $M$-bi-Lipschitz map $f_n:B_{rd}(x;n)\to (X,d)$ with $f_n(x)=x$. For any $k\geq n$, the map $f_k:B_{rd}(x;k)\to (X,d)$ is $M$-bi-Lipschitz on $B_{rd}(x;n)$ and fixes $x$. By the Arzel\`a-Ascoli Theorem, there exists a subsequence $(f_{k_l})_{l=1}^\infty$ that is locally uniformly convergent to an $M$-bi-Lipschitz embedding $F_n:B_{rd}(x;n)\to(X,d)$. By a diagonalization argument we obtain a subsequence that is locally uniformly convergent to an $M$-bi-Lipschitz homeomorphism $F_\infty:(X,rd)\to(X,d)$.
\end{proof}

The reader may notice that the proof of \rf{T:QSS} employs techniques very similar to those used in \cite{Freeman-iiblh}. In fact, we can use the above result to prove an analogue to \cite[Theorem 1.2]{Freeman-iiblh}. We remind the reader that a space is \textit{locally contractible} provided that every point $x\in X$ has a neighborhood that is contractible (i.e.\,a neighborhood on which the identity map is null-homotopic in $X$). A space is \textit{linearly locally contractible} provided there exists a constant $1\leq C<+\infty$ such that each ball $B(x;r)\subset X$ is contractible in $B(x;Cr)$. More explicitly, there exists a continuous map $H:B(x;r)\times[0,1]\to B(x;Cr)$ such that $H(\cdot\,,0)=\id$ and $H(\cdot\,,1)=x$.

\begin{lemma}\label{L:contractible}
Suppose $X$ is a proper, connected, locally contractible, and inversion invariant bi-Lipschitz homogeneous space. Then $X$ is linearly locally contractible. 
\end{lemma}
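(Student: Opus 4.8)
The plan is to combine the quasi-self-similarity provided by \rf{T:QSS} with a local-to-global argument: local contractibility gives a contraction of some definite ball, quasi-self-similarity rescales it to balls of all radii, and bi-Lipschitz homogeneity moves it to every point. More precisely, fix a base point $x_0\in X$ and a group-free observation that, by \rf{T:QSS}, $X$ is $L_0$-quasi-self-similar for some $L_0$ depending only on the inversion invariant bi-Lipschitz homogeneity constant $L$. Since $X$ is locally contractible, there exists $\rho>0$ such that $B_d(x_0;\rho)$ contracts inside $X$; because $X$ is proper and connected, the image of this contraction lands in some ball $B_d(x_0;K\rho)$. So there is a \emph{single} scale $\rho$ and a \emph{single} constant $K$ at which we have a linear contraction at $x_0$.

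Next I would upgrade this to all small scales. Let $r>0$ be arbitrary with $r<\rho$ (large scales can be handled separately or are vacuous if $\diam X=\infty$ in the sense that we only need the definition for $r<\diam X$, and for $r$ bounded below one uses compactness of the relevant closed balls via properness). Apply the quasi-self-similar map $f_s:(X,sd,x_0)\to(X,d,x_0)$ with $s=\rho/r\geq 1$: it is $L_0$-bi-Lipschitz and fixes $x_0$, so it carries $B_{sd}(x_0;\rho)=B_d(x_0;r)$ into a set comparable to $B_d(x_0;\rho)$, and its inverse carries the contraction $H$ of $B_d(x_0;K\rho)$ back to a contraction of a ball around $x_0$ of radius comparable to $Kr$. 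Conjugating $H$ by $f_s$ (i.e.\ forming $f_s^{-1}\circ H\circ(f_s\times\id)$) produces a contraction of $B_d(x_0; c_1 r)$ inside $B_d(x_0; c_2 K r)$ for constants $c_1,c_2$ depending only on $L_0$; absorbing constants, after shrinking we get: every ball $B_d(x_0;r)$ with $r<\rho$ is contractible in $B_d(x_0; C_1 r)$ with $C_1=C_1(L)$.

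Finally I would use bi-Lipschitz homogeneity to remove the dependence on the base point $x_0$. Given any $x\in X$ and any $0<r<\diam X$ (reducing to $r<\rho$ as above via quasi-self-similarity again if needed), choose an $L$-bi-Lipschitz self-homeomorphism $g:X\to X$ with $g(x)=x_0$. Then $g$ maps $B_d(x;r)$ into $B_d(x_0;Lr)$ and $g^{-1}$ maps $B_d(x_0;C_1 Lr)$ into $B_d(x;C_1 L^2 r)$; conjugating the contraction at $x_0$ by $g$ yields a contraction of $B_d(x;r)$ inside $B_d(x;C_1 L^2 r)$. Setting $C:=C_1 L^2$, which depends only on $L$, shows $X$ is linearly locally contractible, as desired.

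The main obstacle is bookkeeping the three nested rescalings/conjugations so that the final contraction constant genuinely depends only on $L$ and not on $r$ or $x$; in particular one must be careful that the quasi-self-similar map $f_s$ in \rf{T:QSS} is only asserted to fix the \emph{one} point $x$ used in that theorem, so the base point $x_0$ must be chosen once and for all at the start, and the homogeneity step is what transports the conclusion to all other points. A secondary point to check is that the contraction of the fixed ball $B_d(x_0;\rho)$ really does have bounded image: this is where properness and connectedness are used — a contraction is a map from a compact set $\overline{B_d(x_0;\rho)}\times[0,1]$ (compactness from properness) into the connected space $X$, hence has bounded (in fact compact) image, giving the finite constant $K$.
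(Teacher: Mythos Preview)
Your argument is correct and follows essentially the same route as the paper: obtain a single contraction $H$ of one fixed ball $B_d(x_0;\rho)$ inside $B_d(x_0;K\rho)$ from local contractibility (with $K<\infty$ via properness), invoke \rf{T:QSS} to rescale this to arbitrary radii at $x_0$, and then conjugate by an $L$-bi-Lipschitz self-map to move to an arbitrary point. The paper combines the homogeneity step and the rescaling step into a single map $F=f_0\circ\id_0\circ g$ before conjugating, whereas you perform them sequentially, but this is purely cosmetic; the resulting constant in both cases has the form $KM^4$ (your $C_1L^2$ with $C_1\simeq KL_0^2$).

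Two small remarks. First, your handling of large radii (``for $r$ bounded below one uses compactness'') is too vague as stated; the clean fix is to note that the inverses $f_s^{-1}$ of the quasi-self-similar maps give $L_0$-bi-Lipschitz maps $(X,td,x_0)\to(X,d,x_0)$ for $t\le1$ as well, so the rescaling step works for all $r>0$ uniformly---which is what the paper implicitly uses. Second, you correctly flag that the point $x_0$ must be chosen to coincide with the point furnished by \rf{T:QSS}; this is fine since local contractibility holds at every point.
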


\begin{proof}
Let $x\in X$ be fixed. By assumption, there exists a contractible neighborhood $U\subset X$ that contains $x$. Choose $s>0$ such that $B_d(x;s)\subset U$. Since the identity map on $U$ is null-homotopic in $X$, its restriction to $B_d(x;s)$ is also null-homotopic in $X$. Therefore, there exists a constant $1\leq C<+\infty$ and a continuous map $H:B_d(x;s)\times[0,1]\to B_d(x;Cs)$ with $H(\cdot\,,0)=\id$ and $H(\cdot\,,1)=x$. For each $z\in B_d(x;s)$, write $H_t(z):=H(z,t)$. 

By \rf{T:QSS}, there exists a constant $1\leq M<+\infty$ such that $X$ is $M$-quasi-self-similar. By increasing $M$ if necessary, we can also assume that $X$ is $M$-bi-Lipschitz homogeneous. 

Now let $y\in X$ and $r>0$, and set $r_0:=s/(M^2r)$. By assumption, there is an $M$-bi-Lipschitz homeomorphism $g:X\to X$ with $g(y)=x$. Let $\id_0:(X,d)\to (X,r_0d)$ denote the identity map. By $M$-quasi-self-similarity there exists an $M$-bi-Lipschitz homeomorphism $f_0:(X,r_0d,x)\to (X,d,x)$. Define $F:=f_0\circ \id_0\circ g$ and we have $F(B_d(y;r))\subset B_d(x;s)$. Define $\tilde{H}:B_d(y;r)\times[0,1]\to X$ as $\tilde{H}(z,t):=F^{-1}\circ H(F(z),t)$ and we see that $\tilde{H}$ is continuous, $\tilde{H}_0$ is the identity map from $B_d(y;r)$ to itself, and $\tilde{H}_1$ is the constant map from $B_d(y;r)$ onto $y$. Furthermore, for $t\in[0,1]$ one can check that $\tilde{H}_t(B_d(y;r))\subset B_d(y;CM^4r)$. We conclude that $X$ is linearly locally contractible with constant $CM^4$.
\end{proof}

\section{Bilipschitz Images of Carnot Groups}\label{S:BLC}

For our purposes (following \cite{Berestovskii-1}), a \textit{Finsler manifold} refers to a differentiable manifold $M$ equipped with a continuous function $F:TM\to\mfR$ which yields a norm when restricted to a particular tangent space $T_pM$. A Riemannian manifold is thus a special case of a Finsler manifold. 

Let $\Delta$ denote a distribution on a Finsler manifold $M$ (i.e.\,a smooth section of $TM$), and let $\Delta_p\subset T_pM$ denote the distribution at a point $p\in M$. Let  $\mathcal{B}_p=\{X_1,X_2,\dots,X_m\}$ denote a (local) basis of vector fields for $\Delta$. Let $\Delta_p^{[i]}$ denote the span of all Lie brackets of order $\leq i$ of elements from $\mathcal{B}_p$. Note that we may have $\Delta_p^{[i]}=\Delta_p^{[i+1]}$. If there exists some $j$ for which $\Delta_p^{[j]}=T_pM$, then $\Delta$ is said to be \textit{bracket generating}. This property is sometimes referred to as \textit{H\"ormander's condition}.

A \textit{Carnot group} $\mathbb{G}$ of step $n$ is a connected, simply connected, nilpotent Lie group with stratified Lie algebra $\Lie(\mathbb{G})=V_1\oplus V_2\oplus\dots\oplus V_n$. The \textit{layers} $V_i$ have the property that, for $1\leq j\leq n-1$, $[V_j,V_1]=V_{j+1}$, where $[X,Y]$ denotes the Lie bracket. Here $V_n\not=\{0\}$ and for each $1\leq j\leq n$ we have $[V_j,V_{n}]=\{0\}$. We refer to $V_1$ as the \textit{horizontal layer} of $\Lie(\mathbb{G})$, and fix some inner product on $V_1$. By left-translation we extend $V_1$ to a left-invariant distribution $\Delta$ on $\mathbb{G}$ with a left-invariant inner product $\langle\cdot,\cdot\rangle$. 

Using the left-invariant norm $\|\cdot\|$ on $\Delta$ obtained from $\langle\cdot,\cdot\rangle$, we define the associated \textit{Carnot-Caratheodory} (or \textit{sub-Riemannian}) distance $d_{CC}$ on $\mathbb{G}$ as follows. Let $\gamma:[0,1]\to\mathbb{G}$ be an absolutely continuous path with endpoints $x=\gamma(0)$ and $y=\gamma(1)$ in $\mathbb{G}$. The path $\gamma$ is \textit{horizontal} provided that for almost every $t\in[0,1]$ we have $\dot\gamma(t)\in \Delta$.  The $d_{CC}$ length of a horizontal path $\gamma$ is 
\begin{equation}\label{E:cc-length}
\ell_{CC}(\gamma):=\int_0^1\|\dot\gamma(t)\|dt.
\end{equation}
We then define 
\begin{equation}\label{E:cc-dist}
d_{CC}(x,y):=\inf\{\ell_{CC}(\gamma):\,\gamma \text{ a horizontal path from } x \text{ to } y\}.
\end{equation}
By well known results of Chow and Rashevskii, $d_{CC}$ is indeed a (finite) geodesic distance on $\mathbb{G}$ due to the fact that the horizontal layer of $\Lie(\mathbb{G})$ yields a left-invariant distribution on $\mathbb{G}$ that is bracket generating.

In our proof below, we need a generalized version the Carnot-Carath\'eodory distance defined via \rf{E:cc-length} and \rf{E:cc-dist} in the context of homogeneous spaces $G/H$. A \textit{homogeneous space $G/H$} is the quotient of a Lie group $G$ by a compact subgroup $H<G$. We assume that $G/H$ is equipped with a $G$-invariant distance. 

Given such a space $G/H$, let $\Delta$ denote a $G$-invariant distribution on $G/H$. Similar to the Carnot group setting, we say that a curve $\gamma$ is \textit{horizontal} provided that $\dot\gamma(t)\in\Delta$ for almost every $t$. Fixing a $G$-invariant norm $F$ on $\Delta$, we define the $\textit{sub-Finsler length}$ of a horizontal curve $\gamma$ in $G/H$ as in \rf{E:cc-length} with $F(\cdot)$ in the place of $\|\cdot\|$. When $\Delta$ is bracket generating, the \textit{Finsler-Carnot-Caratheodory distance} (or \textit{sub-Finsler distance}) $d_{SF}$ obtained via sub-Finsler length as in \rf{E:cc-dist} is indeed finite on $G/H$ (see, for example, \cite{Berestovskii-2}). 
 
\medskip
We define a notion of a \textit{tangent space} to a metric space $X$ using the definition found in \cite{LeDonne-tangents} (see \cite{Herron-ptd-gh} for an alternate definition and a detailed treatment of this concept). Indeed, we say that a (pointed) metric space $(X_\infty,d_\infty,x_\infty)$ is a tangent to the (pointed) metric space $(X,d,x)$ if there exists a sequence of Hausdorff approximations
\[\{\phi_i:(X_\infty,d_\infty,x_\infty)\to(X,\lambda_i d,x)\}_{i\in\mfN},\]
for some $\lambda_i\to+\infty$. That is to say that for all $R\geq0$ and $\delta>0$ we have
\[\limsup_{i\to\infty}\{|\lambda_id(y_i,z_i)-d_\infty(y,z)|:y,z\in B_\infty(R)\}=0.\]
Here $y_i:=\phi_i(y)$, $z_i:=\phi_i(z)$, and $B_\infty(R):=B_{d_\infty}(x_\infty;R)$. In addition,
\[\limsup_{i\to\infty}\{\text{dist}(u;\phi_i(B_\infty(R+\delta))):u\in B_{\lambda_id}(x;R)\}=0.\]

\medskip
With the above definitions in mind we are able to proceed with the proof of \rf{T:Carnot}. As stated in \rf{S:main}, the proof consists of an application of the ideas found in \cite{LeDonne-tangents} and an application of \rf{T:QSS}. 

\begin{proof}[Proof of \rf{T:Carnot}]
Assume that $X$ is inversion invariant bi-Lipschitz homogeneous with respect to a group. Using this group structure we distort $X$ so that it becomes isometrically homogeneous. To do this, first define $d_G(x,y):=\sup_{g\in G}\{d(g(x),g(y))\}$. It is straightfoward to check that $d_G$ is a distance function. Since $G$ consists of uniformly $L$-bi-Lipschitz maps we have $d_G\eqx_L d$. Next, define
\[\ell_G(x,y):=\inf\{d_G\text{-length}(\gamma):\gamma \text{ is a path from } x \text{ to } y\}\] 
Here $d_G$-length is defined as
\[d_G\text{-length}(\gamma):=\sup_\mathcal{P}\left\{\sum_{i=1}^{n-1}d_G(x_i,x_{i+1})\right\},\]
where the supremum is taken over all partitions $\mathcal{P}=\{x_i\}_{i=1}^n$ of $\gamma$ consisting of ordered points along $\gamma$ such that  $x_1=x$ and $x_n=y$.
Since $(X,d)$ is a geodesic space, it follows that $\ell_G\eqx d$. Furthermore, since the distance $d_G$ is invariant under the action of $G$, so is $\ell_G$. Defining $Y:=(X,\ell_G)$ we obtain a geodesic space that is bi-Lipschitz equivalent to $X$ and on which $G$ acts as a transitive group of isometries.

As in \cite{LeDonne-tangents}, we apply results of Gleason, Montgomery, Zippin, and Yamabe to conclude that $G$ can be given the structure of a Lie group (see \cite[Theorem 3.6]{LeDonne-tangents}). We now explain how Le Donne's methods can be used to verify that $X$ is bi-Lipschitz equivalent to a Carnot group $\mathbb{G}$ equipped with a Carnot-Caratheodory distance associated to the horizontal layer of $\Lie(\mathbb{G})$. 

By \cite[Theorem 3]{Berestovskii-2} we conclude that $Y$ is isometric to a homogeneous space $G/H$ endowed with a sub-Finsler distance $d_{SF}$ arising from a $G$-invariant norm $F$ on a bracket generating $G$-invariant distribution $\Delta$. Here $H$ is the stabilizer of some point in $Y$ under the action of $G$. 

As in the proof of \cite[Theorem 1.4]{LeDonne-tangents} we apply Mitchell's Theorem (\cite[Theorem 1]{Mitchell-tangents}) to ensure the existence of a Carnot group $\mathbb{G}$ with a Carnot-Carath\'eodory distance $d_{CC}$ associated to the first layer of $\Lie(\mathbb{G})$ such that the tangent space at any point in $p\in (G/H,d_{SF})$ is bi-Lipschitz equivalent to $\mathbb{G}$, with bi-Lipschitz equivalence constant independent of $p$. 

We make a few remarks to justify this application of Mitchell's Theorem, which is proved in the context of Carnot-Carath\'eodory distances defined via a Riemannian inner product. Let $\Delta$ denote the $G$-invariant distribution on $G/H$ obtained via \cite[Theorem 3]{Berestovskii-2}, and let $F$ denote a $G$-invariant norm on $\Delta$ that is used to obtain the distance $d_{SF}$. Since $\Delta$ is finite dimensional, it follows that any other $G$-invariant norm on $\Delta$ will give rise to a sub-Finsler distance that is bi-Lipschitz equivalent to $d_{SF}$. Therefore, the space $(G/H,d_{SF})$ is bi-Lipschitz equivalent to $(G/H,d_{SR})$, where $d_{SR}$ is a sub-Riemannian distance obtained from a $G$-invariant norm on $\Delta$ that is defined via a $G$-invariant inner product. Mitchell's Theorem applies directly to the space $(G/H,d_{SR})$. Since $(G/H,d_{SF})$ is bi-Lipschitz equivalent to $(G/H,d_{SR})$, any tangent of $(G/H,d_{SF})$ is bi-Lipschitz equivalent to a corresponding tangent of $(G/H,d_{SR})$. 

Since $X$ is quasi-self-similar (by \rf{T:QSS}), it is straightforward to check that any tangent to $X$ is bi-Lipschitz equivalent to $X$ itself. Indeed, suppose the spaces $(X,\lambda_id,x)$ converge to the tangent space $(X_\infty,d_\infty,x_\infty)$. By quasi-self-similarity, for each $i$ there exists a bi-Lipschitz homeomorphism $f_i:(X,\lambda_id,x)\to(X,d,x)$. The sequence $(f_i)$ induces a bi-Lipschitz homeomorphism $f_\infty:(X_\infty,d_\infty,x_\infty)\to(X,d,x)$ (see \cite{Herron-ptd-gh}). 

To conclude the proof, we recall that $(X,d)$ is bi-Lipschitz equivalent to $(G/H,d_{SF})$, and so by combining the previous three paragraphs we find that $(X,d)$ itself is bi-Lipschitz equivalent to the Carnot group $\mathbb{G}$.
\end{proof}

\section{Proofs of the Remaining Results}\label{S:remains}

Before beginning the proof of \rf{C:Rn}, we state two facts which allow us to circumvent the requirement of a geodesic distance appearing the statement of \rf{T:Carnot}. The first fact is \cite[Theorem 1.1]{Freeman-iiblh}.

\begin{fact}\label{F:ARLLC}
Suppose $X$ is a proper, connected, and doubling metric space. If $X$ is inversion invariant bi-Lipschitz homogeneous, then $X$ is Ahlfors $Q$-regular. 
\end{fact}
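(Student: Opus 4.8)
The plan is to deduce Ahlfors regularity from the observation that, under these hypotheses, covering numbers of balls grow exactly like a fixed power of the radius, and then to manufacture an Ahlfors-regular measure from maximal separated nets. Write $\N(A;r)$ for the least number of $r$-balls needed to cover a set $A$; this is finite because $X$ is proper. Three structural ingredients will be exploited: $X$ is uniformly $L$-bi-Lipschitz homogeneous (this is part of the definition of inversion invariant bi-Lipschitz homogeneity), $X$ is metric doubling, and --- crucially --- by \rf{T:QSS} the space $X$ is $M$-quasi-self-similar with $M$ depending only on $L$. Note that quasi-self-similarity already forces $\diam X=\infty$, since for $s>M$ no $M$-bi-Lipschitz copy of $(X,sd)$ can fit inside $(X,d)$ when $X$ is bounded; in particular, covering numbers of large balls are genuinely large.

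First I would record the routine consequences of doubling and homogeneity: $\N(B_d(x;R);r)$ is, up to a multiplicative constant depending only on $L$ and the doubling constant, independent of $x$ (transport centers by bi-Lipschitz self-homeomorphisms and absorb the resulting radius change using the doubling property), and it is $\lex(R/r)^{s_0}$ for $r<R$, where $s_0$ is the doubling exponent. Then quasi-self-similarity is used to show that this covering number depends only on the ratio $R/r$: the map $f_s\colon(X,sd,x_0)\to(X,d,x_0)$, viewed as a self-map of $(X,d)$, distorts all distances by a factor comparable to $s$, so pushing a cover of $B_d(x_0;R)$ forward by $f_s$ yields $\N(R,r)\eqx\N(sR,sr)$ for every $s>0$ (I abbreviate $\N(R,r):=\N(B_d(x;R);r)$; the constant depends only on $M$ and the doubling constant). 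Hence there is a single increasing function $\varphi$ on $[1,\infty)$ with $\N(R,r)\eqx\varphi(R/r)$.

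The core of the argument is to upgrade this to $\varphi(t)\eqx t^Q$ for a unique exponent $Q>0$. The bound $\varphi(ab)\lex\varphi(a)\varphi(b)$ is straightforward: cover $B_d(p;ab)$ by $\eqx\varphi(a)$ balls of radius $b$, then cover each of those by $\eqx\varphi(b)$ balls of radius $1$. The reverse bound $\varphi(ab)\gex\varphi(a)\varphi(b)$, valid for $a,b$ large, is the step I expect to be the main obstacle; it is a packing argument. Take a maximal $(b/2)$-separated set $\{x_i\}\subset B_d(p;ab)$, which has $\eqx\varphi(a)$ elements; the balls $B_d(x_i;b/8)$ are pairwise disjoint, each requires $\gex\varphi(b)$ unit balls to cover (by homogeneity), and, when $b$ is large, no single unit ball can meet two of them, so any cover of $B_d(p;ab)$ by unit balls uses $\gex\varphi(a)\varphi(b)$ of them. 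A Fekete-type argument applied to $\log\varphi$ then produces $Q\in(0,s_0]$ with $\varphi(t)\eqx t^Q$ (one also gets $Q\geq1$ from connectedness, though connectedness enters here only insofar as it is a hypothesis of \rf{T:QSS}).

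Finally I would convert the uniform estimate $\N(B_d(x;R);r)\eqx(R/r)^Q$ into Ahlfors $Q$-regularity. For each $r>0$ fix a maximal $r$-separated net $\mathcal N_r\subset X$ and set $\mu_r:=r^Q\sum_{z\in\mathcal N_r}\delta_z$, where $\delta_z$ is the unit point mass at $z$; comparability of packing and covering numbers gives $\mu_r(B_d(x;R))\eqx R^Q$ whenever $R\geq r$, and $\mu_r(B_d(x;R))\lex\max(R,r)^Q$ in general, uniformly in $x$. Since $X$ is proper, closed balls are compact, so one may extract (by a diagonal argument over an exhaustion of $X$ by closed balls centered at a fixed point) a weak-$*$ subsequential limit $\mu$ of the $\mu_r$ as $r\to0$; the standard portmanteau inequalities, together with a little slack between the radii $R/2$, $R$, $2R$ that is absorbed by doubling, promote the estimates above to $\mu(B_d(x;R))\eqx R^Q$ for all $x\in X$ and $R>0$. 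A metric space carrying a measure comparable to $R^Q$ on balls is Ahlfors $Q$-regular, and that measure is comparable to $\mathcal H^Q$, with $Q$ and the regularity constant depending only on $L$ and the doubling constant; this completes the proof.
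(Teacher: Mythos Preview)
The paper does not prove \rf{F:ARLLC}; it quotes the result verbatim as \cite[Theorem 1.1]{Freeman-iiblh}. So there is no in-paper proof to compare against, and your proposal is an independent argument. There is no circularity in invoking \rf{T:QSS}: that theorem is proved in the present paper directly from the definition of inversion invariant bi-Lipschitz homogeneity, without appealing to Ahlfors regularity. Your outline---scale invariance of covering numbers via quasi-self-similarity, approximate multiplicativity of $\varphi(t)=\N(B(x;t);1)$, a Fekete argument to extract the exponent $Q$, and then a weak-$*$ limit of discretized measures---is sound and essentially self-contained; the packing step for $\varphi(ab)\gex\varphi(a)\varphi(b)$ is the right idea and your separation bookkeeping works.

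One gap worth flagging: you use \rf{T:QSS} to conclude $X$ is quasi-self-similar and hence unbounded, and from that point on your argument only treats unbounded $X$. But \rf{F:ARLLC} as stated does not assume $\diam X=\infty$, and bounded examples satisfying all the hypotheses certainly exist (e.g.\ $\mfS^1$, or any Ahlfors-regular quasi-circle via \rf{F:Q-regular}). Indeed, for bounded $X$ the proof of \rf{T:QSS} itself breaks down at the step ``choose $f$ with $f(x)\in S_d(p;2s/\sqrt r)$'' once $2s/\sqrt r>\diam X$, so you cannot literally invoke it. The fix is short: if $X$ is bounded, pass to the inversion $X_p^*$, which is unbounded, proper, connected, doubling, and again inversion invariant bi-Lipschitz homogeneous (by \cite[Proposition 3.3 and Lemma 3.2]{BHX-invert}); run your argument there to get Ahlfors $Q$-regularity of $X_p^*$, and then transfer this back to $X$ using the local bi-Lipschitz behavior of $\phi_p$ on annuli (\rf{F:annulus-distortion}) together with bi-Lipschitz homogeneity. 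With that reduction added, the argument is complete.
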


The second fact comes from \cite[Theorem B.6]{Semmes-curves}. 

\begin{fact}\label{F:Semmes}
Suppose $X$ is an orientable topological manifold of dimension $n$. If $X$ is Ahlfors $n$-regular and linearly locally contractible, then $X$ is quasiconvex. 
\end{fact}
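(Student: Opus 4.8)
The statement is due to Semmes \cite[Theorem B.6]{Semmes-curves}; here is the strategy I would pursue. The plan is to reduce quasiconvexity to a scale-invariant local statement and then to produce short curves by pulling them back from $\mfR^n$ under a carefully constructed ``quasi-chart'' on $X$. First I would observe that, since Ahlfors $n$-regularity and linear local contractibility are both invariant under rescaling the metric, it is enough to exhibit a constant $C$, depending only on the structural data and $n$, so that whenever $r:=d(x,y)$ is small compared to $\diam(X)$ the points $x$ and $y$ can be joined inside $B(x;Cr)$ by a curve of length at most $Cr$; a standard localisation-and-chaining argument (using that a connected topological manifold is path-connected, and that we may assume completeness) then upgrades this to genuine quasiconvexity.

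The core step is to build, for each such $x$ and $r$, a Lipschitz map $g\colon B(x;2r)\to\mfR^n$ that is \emph{regular}---it does not collapse $\mathcal{H}^n$-measure and its multiplicity function is bounded---and has \emph{topological degree one} onto a Euclidean ball of radius comparable to $r$, with $g$ roughly an identification at scale $r$. I would construct $g$ by a quantitative-topology argument: take a maximal $\delta$-separated set in $B(x;3r)$ for a small $\delta=r/N$; by Ahlfors $n$-regularity (hence metric doubling) the $2\delta$-balls about these points cover $B(x;2r)$ with multiplicity bounded in terms of the data, so the nerve $\mathcal{N}$ of the cover is a finite complex of bounded dimension and bounded local complexity. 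One then transfers a model Euclidean structure through $\mathcal{N}$ and extends over skeleta using linear local contractibility, keeping the number of choices and the sizes of the pieces under control via the regularity; invariance of domain together with the fundamental class of the oriented manifold is what forces the resulting map to have degree one, so that $g$ genuinely behaves like a degree-one branched covering rather than collapsing $B(x;r)$.

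Granting such a $g$, short curves come out by a coarea/Fubini argument in the target. Consider the straight segment from $g(x)$ to $g(y)$ together with its translates $\sigma_v$ by vectors $v$ in a ball of fixed small radius $\eps r$; the coarea inequality applied to $g$, combined with the $n$-regularity of $X$, bounds the average over $v$ of $\mathcal{H}^1\bigl(g^{-1}(\sigma_v)\bigr)$ by a multiple of $r$, so some $\sigma_v$ has $g^{-1}(\sigma_v)$ of length $\lesssim r$, while the degree-one property (via a Sard/transversality argument for the branched cover) guarantees that a component of $g^{-1}(\sigma_v)$ runs from a point near $x$ to a point near $y$. I would then correct the two $O(\delta)$ endpoint errors by iterating the construction at geometrically decreasing scales and concatenating; the errors form a geometric series, so the total length stays comparable to $d(x,y)$, giving quasiconvexity with a constant depending only on the Ahlfors regularity and linear local contractibility constants and on $n$. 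The step I expect to be the genuine obstacle is the middle one: manufacturing $g$ with Lipschitz, non-collapsing, \emph{and} degree-one bounds at once, because linear local contractibility supplies only continuous null-homotopies, not Lipschitz ones, so a naive cell-by-cell construction loses all control of the Lipschitz constant. Reconciling this is exactly the quantitative-topology content of \cite{Semmes-curves}, where the Ahlfors regularity hypothesis keeps the combinatorial model of bounded geometry and orientability is what makes the degree meaningful.
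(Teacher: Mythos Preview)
The paper does not prove this statement at all: it is recorded as a \emph{Fact} and attributed to \cite[Theorem~B.6]{Semmes-curves} without further argument. So there is no proof in the paper to compare against; your proposal goes well beyond what the paper does by attempting to sketch Semmes's actual argument.

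As a sketch of Semmes's method, your outline is broadly faithful: the reduction to a scale-invariant local problem, the construction of a Lipschitz ``regular'' map $g$ of degree one into $\mfR^n$ using a nerve of a bounded-multiplicity cover together with linear local contractibility and orientability, and the coarea/slicing argument to extract a short curve from preimages of generic segments are indeed the main ingredients. You are also right that the genuine difficulty sits in the middle step---producing $g$ with simultaneous Lipschitz, non-collapsing, and degree-one control---and that this is precisely the quantitative-topology content one must borrow from \cite{Semmes-curves}. Since you explicitly defer that step back to Semmes, what you have written is an informed road map rather than an independent proof, which is entirely appropriate given that the paper itself treats the result as a black box.
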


\begin{proof}[Proof of \rf{C:Rn}]
We prove necessity, as sufficiency is straightforward. Thus we assume that $X$ is inversion invariant bi-Lipschitz homogeneous with respect to a group. By \rf{F:ARLLC} and \rf{L:contractible}, we are able to apply \rf{F:Semmes} in order to conclude that $X$ is quasiconvex. Via the Hopf-Rinow Theorem, this implies that $X$ is bi-Lipschitz equivalent to a geodesic space. Applying \rf{T:Carnot}, we conclude that $X$ is bi-Lipschitz equivalent to a Carnot group $\mathbb{G}$ equipped with a left-invariant Carnot-Caratheodory distance associated with the first layer $V_1$ of the stratified algebra $\Lie(\mathbb{G})$.
By assumption, the Hausdorff and topological dimensions of $X$ agree. Therefore, $V_1=\Lie(\mathbb{G})$, and so $\mathbb{G}$ is abelian. Since any two left-invariant inner products on the left-invariant extension of $V_1$ yield bi-Lipschitz equivalent Riemannian distances, we conclude that $\mathbb{G}$ is bilipschitz equivalent to $\mfR^n$.
\end{proof}
 
Lastly, we provide the following example. As noted in the proof below, this example is not rectifiably connected. It would be interesting to know if there exists a similar example that is geodesic (or at least quasiconvex).

\begin{example}\label{X:rug}
There exists a surface in $\mfR^3$ that is quasi-self-similar and uniformly bi-Lipschitz homogeneous with respect to a group, yet fails to be inversion invariant bi-Lipschitz homogeneous.
\end{example}

\begin{proof}
Fix $1<Q<2$ and let $\Gamma\subset\mfR^2$ denote a bi-Lipschitz homogeneous, Ahlfors $Q$-regular, and proper curve homeomorphic to $\mfR$. By the constructions in \cite{Freeman-blh}, such a curve exists (indeed, such a curve can be obtained as the Hausdorff limit of an explicitly defined sequence of piecewise-linear curves). 

By \cite[Theorem 1.1]{Freeman-blh} and \cite[Theorem E]{HM-blh}, there exists an $\eta$-quasi-homogeneous parameterization $f:\mfR\to\Gamma$ such that $|f(x)-f(y)|^{1/Q}\eqx|x-y|$. It follows as in the proof of \rf{T:R2} $(c)\Rightarrow(a)$ that the curve $\Gamma$ is uniformly bi-Lipschitz homogeneous with respect to a group. Since products of bi-Lipschitz maps are bi-Lipschitz, $S$ is also bi-Lipschitz homogeneous with respect to a group.

To see that $S$ is quasi-self-similar, we proceed as follows. Given $r>0$, define $f_r:\Gamma\to\Gamma$ as
\[f_r(x):=f(r^{1/Q}f^{-1}(x)).\]
Therefore, for two points $x,y\in\Gamma$ we have
\[|f_r(x)-f_r(y)|\eqx r|f^{-1}(x)-f^{-1}(y)|^Q\eqx r|x-y|\]
Then define $F_r:\Gamma\times\mfR\to\Gamma\times\mfR$ as $F_r(x,t):=(f_r(x),rt)$. We find that $F_r$ fixes the origin (which we may assume lies in $S$) and furthermore, 
\[d(F_r(x,t),F_r(y,s))=|f_r(x)-f_r(y)|+|rt-rs|\eqx r\,d((x,t),(y,s)).\]

Finally, we show that $S$ fails to be inversion invariant bi-Lipschitz homogeneous. This is due to the fact that $S$ is not rectifiably connected. Indeed, we note that through each point  $(x,t)\in S=\Gamma\times \mfR$ there exists precisely one locally rectifiable curve, namely $\{x\}\times\mfR$. Since inversion is locally bi-Lipschitz away from the point of inversion, there is precisely one locally rectifiable curve through each point of $S^*_0\setminus\{0\}$. Here $S^*_0$ denotes the Euclidean inversion of $\hat{S}$ at the origin. However, there are infinitely many rectifiable paths through the origin in $S^*_0$. To see this, note that the inversion of each line $\{x\}\times\mfR\subset S$ is locally rectifiable and passes through the origin in $S^*_0$. Since bi-Lipschitz maps preserve locally rectifiable paths, $S^*_0$ is not bi-Lipschitz homogeneous. 
\end{proof}

\bibliographystyle{amsalpha}  
\bibliography{4872}            

\end{document}